\DeclareMathOperator{\sign}{sign}
\def\ps@pprintTitle{%
 \let\@oddhead\@empty
 \let\@evenhead\@empty
 \def\@oddfoot{}%
 \let\@evenfoot\@oddfoot}
\newtheorem{thm}{Theorem}
\newtheorem{lem}[thm]{Lemma}
\theoremstyle{example}
\theoremstyle{definition}
\theoremstyle{remark}
\newtheorem{rem}{Remark}
\journal{}
\title{Sharp subcritical and critical $L^{p}$
Hardy inequalities on the sphere}
\begin{document}

\begin{frontmatter}

\title{\Large Sharp subcritical and critical $L^{p}$
Hardy inequalities on the sphere}

\author{Ahmed A. Abdelhakim}
\address{Mathematics Department, Faculty of Science, Assiut University, Assiut, Egypt}
\ead{ahmed.abdelhakim@aun.edu.eg}

\begin{abstract}
We prove sharp inequalities of Hardy type
for functions in the Sobolev space
$W^{1,p}$ on the unit sphere $\mathbb{S}^{n-1}$ in $\mathbb{R}^{n}$. We achieve this in both the subcritical and critical cases. The method we use to show optimality takes into account all the constants involved in our inequalities.
\end{abstract}

\begin{keyword}
Sobolev Space
\sep $L^{p}$ Hardy inequalities
\sep density agrument
\sep unit sphere
\MSC[2010] 26D10, 35A23, 46E35.
\end{keyword}

\end{frontmatter}
\section{Introduction}
\indent Since the authors in \cite{Kombe} introduced an idea
to obtain sharp inequalities of Hardy and Rellich type on noncompact Riemannian manifolds, there has been interest in establishing the corresponding inequalities on the sphere.
Obviously, the method given in \cite{Kombe} does not apply directly to the compact manifold $\mathbb{S}^{n}$. Another technical difficulty comes from the fact that
the Laplacian of the geodesic distance on the sphere changes sign. Xiao \cite{Xiao} was the first to make progress on this problem. He obtained $L^{2}$
inequalities of the Hardy type on the sphere $\mathbb{S}^{n}$, $n\geq 3$. These results were complemented in \cite{ahmed} in the limiting case
where optimal $L^{2}$ inequalities of the Hardy type were proved on $\mathbb{S}^{2}$. Xiao's results were also extended to $L^{p}(\mathbb{S}^{n})$, $1<p<n$, $n\geq 3$ in \cite{xsun}.
\\
\indent
In \cite{ahmed,xsun,Xiao}, the singularity is assumed to be at either the north or south pole so that
the geodesic distance will be simply the
polar angle. So, if the singularity is not polar, we must rotate the local axes in order to apply these inequalities. But we should not need to rotate the axes. It is not physically plausible as we could be dealing with a punctured sphere missing a closed connected piece, or a sphere with a crack missing an open simple curve. This motivates us to look for $L^{p}$ Hardy inequalities in which the singularity is the geodesic distance from an arbitrary point.\\
\indent The general geodesic distance is very recently considered in \cite{Abimbola1,Songting}. The proofs in \cite{Abimbola1,Songting} are based on a formula for the Laplacian of the geodesic distance. Inconveniently, no reference was provided for that formula, and no proof of it was given either. More inconveniently, the definition of the geodesic distance on $\mathbb{S}^{n}$ adopted in \cite{Abimbola1,Songting} is not specified. Such definition is
important to understand the set up of the inequalities.
This is also technically important since the singularities in the inequalities involve trigonometric functions. That in turn necessitates determining whether the range of the geodesic distance is $[-\frac{\pi}{2},\frac{\pi}{2}]$ or $[0,\pi]$.\\
\indent In \cite{Songting}, an $L^{2}$ Hardy inequality is proved in high dimensions using Xiao's method.
The results in \cite{Abimbola1} are supposed to generalize the $L^{2}$ Hardy inequality presented in
\cite{Songting} to an $L^{p}$ inequality on
$\mathbb{S}^{n}$ where $1<p<n$ and $n\geq 3$.
Unfortunately, the proof presented in \cite{Abimbola1}
requires revision.  We discuss that in detail in \cite{ahmed3}, where we additionally prove limiting case $L^{n}$ Hardy type inequalities on the sphere
$\mathbb{S}^{n}$, $n\geq 2$, with optimal coefficients considering the general geodesic distance and adopting Xiao's method.\\
\indent
When it comes to the sharpness of the coefficients,
all the results in \cite{Abimbola1,ahmed,xsun,Xiao, Songting} are based on the same principle that we find insufficient. The method implemented is also unnecessarily involved at times. Inequalities of Hardy type obtained in \cite{Abimbola1,ahmed,xsun,Xiao, Songting} on $\mathbb{S}^{n}$ take the generic form
\begin{equation*}
A_{n,p}\int_{\mathbb{S}^{n}}
\frac{|u|^{p}}{|f(\rho)|^{p}}d\sigma_{n}\leq
B_{n,p}\int_{\mathbb{S}^{n}}
|\nabla u|^{p}d\sigma_{n}+
C_{n,p}\int_{\mathbb{S}^{n}}
\frac{|u|^{p}}{|f(\rho)|^{p-2}}d\sigma_{n},\quad 2\leq p\leq n.
\end{equation*}
where $u\in C^{\infty}(\mathbb{S}^{n})$, and $f$
is a continuous function of the geodesic distance $\rho$. Sharpness of the constants $A_{n,p}$,
$B_{n,p}$ and $C_{n,p}$ is claimed to be proved by showing that
\begin{equation}\label{shrp1}
\sup_{u\in C^{\infty}(\mathbb{S}^{n})\setminus \left\{0\right\}}{\frac{A_{n,p}\int_{\mathbb{S}^{n}}
\frac{|u|^{p}}{|f(\rho)|^{p}}d\sigma_{n}}
{B_{n,p}\int_{\mathbb{S}^{n}}
|\nabla u|^{p}d\sigma_{n}+
C_{n,p}\int_{\mathbb{S}^{n}}
\frac{|u|^{p}}{|f(\rho)|^{p-2}}d\sigma_{n}}}=1.
\end{equation}
But the latter does not prove that the constants
$B_{n,p}$ and $C_{n,p}$ are both the smallest possible.\\
\indent We prove sharp $L^{p}$ Hardy inequalities
on the sphere $\mathbb{S}^{n}$ in $\mathbb{R}^{n+1}$
in both the subcritical and critical exponent cases.
We follow a method of proof different from that
used in \cite{Abimbola1,ahmed,xsun,Xiao, Songting}. The method we adopt is fairly simpler and require less computations. Before delving into the derivation of the inequalities, we use explicit formulas for the geodesic distance, the surface gradient and the Laplace-Beltrami operator on the $n$-dimensional sphere to demonstrate
some basic properties of the geodesic distance on which we rely heavily in obtaining our results.\\
\indent Besides proving (\ref{shrp1}), we show the optimality of all the constants in our inequalities by proving that
\begin{equation*}
\begin{split}
&\sup_{u\in C^{\infty}(\mathbb{S}^{n})\setminus \left\{0\right\}}{\frac{A_{n,p}\int_{\mathbb{S}^{n}}
\frac{|u|^{p}}{|f(\rho)|^{p}}d\sigma_{n}-
C_{n,p}\int_{\mathbb{S}^{n}}
\frac{|u|^{p}}{|f(\rho)|^{p-2}}d\sigma_{n}}
{B_{n,p}\int_{\mathbb{S}^{n}}
|\nabla u|^{p}d\sigma_{n}}}=1,\\
&\sup_{u\in C^{\infty}(\mathbb{S}^{n})\setminus \left\{0\right\}}{\frac{A_{n,p}\int_{\mathbb{S}^{n}}
\frac{|u|^{p}}{|f(\rho)|^{p}}d\sigma_{n}-
B_{n,p}\int_{\mathbb{S}^{n}}
|\nabla u|^{p}d\sigma_{n}}
{C_{n,p}\int_{\mathbb{S}^{n}}
\frac{|u|^{p}}{|f(\rho)|^{p-2}}d\sigma_{n}}}=1.
\end{split}
\end{equation*}
To achieve this, we exploit a formula for integration over spheres (see (\ref{frml}) below) to calculate the ratios above for explicit functions in the appropriate Sobolev space.
\section{Preliminaries}
Let $n\geq 2$ and define $\Theta_{n-1}:=(\theta_j)_{j=1}^{n-1}\in
[0,\pi]^{n-2}\times[0,2\pi]$.
Then any point on the unit sphere
$\mathbb{S}^{n-1}$ in $\mathbb{R}^{n}$ has the spherical coordinates parametrization $\left(x_{m}(\Theta_{n-1})\right)_{m=1}^{n}$, where
\begin{equation}\label{xm}
x_{m}(\Theta_{n-1}):=\left\{
                \begin{array}{ll}
                  \cos{\theta_1}, & \hbox{$m=1$;} \vspace{0.1 cm} \\
                  \prod_{j=1}^{m-1}\sin{\theta_j}\cos{\theta_m}, & \hbox{$2\leq m\leq n-1$;} \vspace{0.15 cm}\\
                  \prod_{j=1}^{n-1}\sin{\theta_j}, & \hbox{$m=n$.}
                \end{array}
              \right.
\end{equation}
The gradient $\nabla_{\mathbb{S}^{n-1}}$ on the sphere $\mathbb{S}^{n-1}$  is then given by
\begin{equation*}
\nabla_{\mathbb{S}^{n-1}}  = \frac{\partial }{\partial \theta_1} \widehat{{\theta}_{1}} + \frac{1}{\sin \theta_1} \frac{\partial }{\partial \theta_2} \widehat{{\theta}_{2}}  + \cdots + \frac{1}{\sin \theta_1 \cdots \sin \theta_{n-2}} \frac{\partial }{\partial \theta_{n-1}} \widehat{{\theta}_{n-1}},
\end{equation*}
where $\left\{\widehat{{\theta}_{j}}\right\}$
is an orthonormal set of tangential vectors with
$\widehat{{\theta}_{j}}$ pointing in the direction of increase of ${\theta}_{j}$. Moreover, the Laplace-Beltrami operator $\Delta_{\mathbb{S}^{n-1}}$ is given by
\begin{align}\label{lapbelt}
\nonumber \hspace{-1 cm}\Delta_{\mathbb{S}^{n-1}}  =&\,
 \frac{1}{\sin^{n-2}{\theta_{1}}}
 \frac{\partial }{\partial \theta_1}
 \left(\sin^{n-2}{\theta_{1}} \frac{\partial }{\partial \theta_1} \right)+
\frac{1}{\sin^{2}{\theta_{1}}\sin^{n-3}{\theta_{2}}}
 \frac{\partial }{\partial \theta_2}
 \left(\sin^{n-3}{\theta_{2}} \frac{\partial }{\partial \theta_2} \right)+\\&\nonumber
+\dots+\frac{1}{\sin^{2}{\theta_{1}}\sin^{2}{\theta_{2}}...
 \sin^{2}{\theta_{n-3}}\sin{\theta_{n-2}}}
 \frac{\partial }{\partial \theta_{n-2}}
 \left(\sin{\theta_{n-2}} \frac{\partial }{\partial \theta_{n-2}} \right)+\\&+
  \frac{1}{\sin^{2}{\theta_{1}}\sin^{2}{\theta_{2}}...
 \sin^{2}{\theta_{n-2}}}
 \frac{\partial^2 }{\partial \theta^{2}_{n-1}}.
\end{align}
Identifying each point $(x_m(\Theta_{n-1}))_{m=1}^{n}\in \mathbb{S}^{n-1}$ with its parameters $\Theta_{n-1}$, we can express the geodesic distance $d(\Theta_{n-1},\Phi_{n-1})$ from a point $\Phi_{n-1}
\in \mathbb{S}^{n-1}$ as
\begin{equation}\label{gdsc}
d(\Theta_{n-1},\Phi_{n-1})=\arccos{
\lambda(\Theta_{n-1},\Phi_{n-1})},
\end{equation}
where
\begin{equation}\label{lmda}
\lambda(\Theta_{n-1},\Phi_{n-1}):=\sum_{m=1}^{n}
x_m(\Theta_{n-1})x_m(\Phi_{n-1}).
\end{equation}
\subsection{A useful formula for integration over
$\mathbb{S}^{n}$}
Let $v\in \mathbb{R}^{n}\setminus\left\{0\right\}$
and let $F\in L^{1}(\mathbb{S}^{n-1}\longrightarrow \mathbb{R})$ be such that $F(\Theta_{n-1}):=f(v\cdot\Theta_{n-1})$. Then
\begin{equation}\label{frml}
\int_{\mathbb{S}^{n-1}}
F(\Theta_{n-1})\,d\sigma_{n-1}=C_{n}
\int_{-1}^{1}f(|v|t)\left(\sqrt{1-t^2}\right)^{n-3}dt,
\end{equation}
where $\displaystyle C_{n}=\frac{
2\pi^\frac{n-1}{2}}{\Gamma{\left(\frac{n-1}{2}\right)}}$.
(See \cite{loukas}, Appendix D).
\subsection{The Sobolev space
$W^{1,p}\left(\mathbb{S}^{n-1}\right)$}
It is useful to define the weak
Laplace - Beltrami gradient
of a function $f\in L^{1}\left(\mathbb{S}^{n-1}\right)$. Let $f\in C^{\infty}\left(\mathbb{S}^{n-1}\rightarrow \mathbb{R}\right)$.
Then, by the divergence theorem, we have
\begin{equation*}
\int_{\mathbb{S}^{2}}\nabla f \cdot V \, d\sigma
=-\int_{\mathbb{S}^{2}} f\, \nabla\cdot V \, d\sigma
\end{equation*}
for any vector field
$V\in C^{\infty}\left(\mathbb{S}^{n-1}\rightarrow
T\left(\mathbb{S}^{n-1}\right)\right)$ where $T\left(\mathbb{S}^{n-1}\right)$
is the tangent bundle on the smooth manifold $\mathbb{S}^{n-1}$. Therefore, $f$ is weakly differentiable if there exists
a vector field $\Psi_{f}\in L^{1}\left(\mathbb{S}^{n-1}\rightarrow T\left(\mathbb{S}^{n-1}\right)\right)$ such that
\begin{equation*}
\int_{\mathbb{S}^{n-1}} \Psi_{f} \cdot V \, d\sigma_{n-1}
=-\int_{\mathbb{S}^{n-1}} f\, \nabla\cdot V \, d\sigma_{n-1},
\quad \forall\, V\in C^{\infty}\left(\mathbb{S}^{n-1}\rightarrow
T\left(\mathbb{S}^{n-1}\right)\right).
\end{equation*}
Such vector field $\Psi_{f}$, if it exists, is called the weak surface gradient of $f$. The weak surface gradient is unique up to a set of measure zero.
As shown in (\cite{Eichhorn}, Proposition 3.2., page 15)
\begin{equation}\label{dfnw1p}
W^{1,p}\left(\mathbb{S}^{n-1}\right):=
\left\{f\in L^{p}(\mathbb{S}^{n-1}):
|\Psi_{f}|\in L^{p}\left(\mathbb{S}^{n-1}\right)\right\}.
\end{equation}
The definition (\ref{dfnw1p}) is equivalent to defining $W^{1,p}\left(\mathbb{S}^{n-1}\right)$ as the completion of the space $C^{\infty}\left(\mathbb{S}^{n-1}\right)$
in the usual Sobolev norm.\\
\subsection{A formula for integration on the sphere}
Let $v\in \mathbb{R}^{n}\setminus\left\{0\right\}$
and let $F\in L^{1}(\mathbb{S}^{n-1}\longrightarrow \mathbb{R})$ be such that $F(\Theta_{n-1}):=f(v\cdot\Theta_{n-1})$. Then
\begin{equation}\label{frml}
\int_{\mathbb{S}^{n-1}}
F(\Theta_{n-1})\,d\sigma_{n-1}=C_{n}
\int_{-1}^{1}f(|v|t)\left(\sqrt{1-t^2}\right)^{n-3}dt,
\end{equation}
where $\displaystyle C_{n}=\frac{
2\pi^\frac{n-1}{2}}{\Gamma{\left(\frac{n-1}{2}\right)}}$.\\
\indent
In the next section, we show interesting properties of the geodesic distance on the sphere that carry on to all dimensions.\\
\section{The gradient and Laplacian of the geodesic distance on the sphere}
The geodesic distance $d$ on the sphere $\mathbb{S}^{n-1}$ has a gradient and Laplacian analogous to those of the Euclidean metric. We demonstrate that $|\nabla_{\mathbb{S}^{n-1}}d|_{\mathbb{S}^{n-1}}=1$ and that $\Delta_{\mathbb{S}^{n-1}}d=(n-2)\cos{d}/\sin{d}$, in any dimension $n\geq 2$. Unlike with the Euclidean distance, the laplacian of the geodesic
distance $d$ changes sign on the sphere. We start with showing that
\begin{equation*}
x_j(\Theta_{n-1})x_k(\Theta_{n-1})+
\nabla_{\mathbb{S}^{n-1}}x_j(\Theta_{n-1})\cdot
\nabla_{\mathbb{S}^{n-1}}x_k(\Theta_{n-1})=\delta_{jk},
\end{equation*}
the kronecker delta.
\begin{lem}\label{lemkronecker}
Let $n\geq 2$ and let $\nabla_{\mathbb{S}^{n-1}}$ be the gradient on the unit sphere $\mathbb{S}^{n-1}$ in
$\mathbb{R}^{n}$. Then
\begin{equation}\label{nabla1}
x^2_m(\Theta_{n-1})+|\nabla_{\mathbb{S}^{n-1}}x_m(\Theta_{n-1})|^2=1,
\end{equation}
\begin{equation}\label{nabla2}
x_{\ell}(\Theta_{n-1})x_m(\Theta_{n-1})+
\nabla_{\mathbb{S}^{n-1}}x_{\ell}(\Theta_{n-1})\cdot
\nabla_{\mathbb{S}^{n-1}}x_m(\Theta_{n-1})=0, \quad {\ell}\neq m.
\end{equation}
\end{lem}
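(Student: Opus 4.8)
The plan is to observe that the two claimed identities are merely the diagonal ($\ell=m$) and off--diagonal ($\ell\neq m$) instances of the single relation
\begin{equation*}
x_{\ell}(\Theta_{n-1})\,x_{m}(\Theta_{n-1})+\nabla_{\mathbb{S}^{n-1}}x_{\ell}(\Theta_{n-1})\cdot\nabla_{\mathbb{S}^{n-1}}x_{m}(\Theta_{n-1})=\delta_{\ell m},
\end{equation*}
so it suffices to establish this one formula. The most economical route avoids differentiating the explicit products of sines and cosines in (\ref{xm}) altogether, and instead exploits the fact that each $x_{m}$ is the restriction to $\mathbb{S}^{n-1}$ of the ambient linear coordinate function $y\mapsto y_{m}$ on $\mathbb{R}^{n}$, whose Euclidean gradient is the constant standard basis vector $e_{m}$.

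The key step is to identify the surface gradient $\nabla_{\mathbb{S}^{n-1}}x_{m}$ with the tangential projection of $e_{m}$. Writing $\mathbf{x}=(x_{1},\dots,x_{n})$ for the position vector, which on the unit sphere is exactly the outward unit normal and therefore satisfies $\mathbf{x}\cdot\mathbf{x}=1$, the orthogonal projection onto the tangent space $T_{\mathbf{x}}\mathbb{S}^{n-1}=\{V:V\cdot\mathbf{x}=0\}$ is $P=I-\mathbf{x}\mathbf{x}^{\mathsf{T}}$. I would justify
\begin{equation*}
\nabla_{\mathbb{S}^{n-1}}x_{m}=P\,e_{m}=e_{m}-x_{m}\,\mathbf{x}
\end{equation*}
from the defining property of the surface gradient: for every tangent vector $V$ one has $V\cdot\nabla_{\mathbb{S}^{n-1}}x_{m}=V(x_{m})=V\cdot e_{m}$, since the directional derivative of a linear coordinate function returns its $m$-th component. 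Hence $\nabla_{\mathbb{S}^{n-1}}x_{m}$ and $e_{m}$ share the same tangential part.

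With this in hand the conclusion is a one--line computation. Using $P^{\mathsf{T}}=P=P^{2}$ and $\mathbf{x}\cdot\mathbf{x}=1$,
\begin{equation*}
\nabla_{\mathbb{S}^{n-1}}x_{\ell}\cdot\nabla_{\mathbb{S}^{n-1}}x_{m}=(e_{\ell}-x_{\ell}\mathbf{x})\cdot(e_{m}-x_{m}\mathbf{x})=\delta_{\ell m}-x_{\ell}x_{m},
\end{equation*}
and moving the term $x_{\ell}x_{m}$ to the left-hand side gives the Kronecker identity. Setting $\ell=m$ yields (\ref{nabla1}) and $\ell\neq m$ yields (\ref{nabla2}).

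The only delicate point is the passage leading to the projection formula: one must confirm that the coordinate expression for $\nabla_{\mathbb{S}^{n-1}}$ in the orthonormal frame $\{\widehat{\theta_{j}}\}$ really does compute the tangential projection of an ambient gradient, and that $e_{m}-x_{m}\mathbf{x}$ is genuinely tangent (which is immediate since its dot product with $\mathbf{x}$ vanishes). If one prefers to stay entirely within the explicit coordinates of the Preliminaries, the alternative is to differentiate each $x_{m}$ in (\ref{xm}) against every $\theta_{j}$, weight by the frame coefficients $1,1/\sin\theta_{1},\dots$, and sum; the main obstacle is then purely the bookkeeping of these nested products, which collapses through repeated use of $\sin^{2}+\cos^{2}=1$ to reproduce the Kronecker delta.
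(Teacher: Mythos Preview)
Your argument is correct and is genuinely different from the paper's own proof. You recognize that each $x_{m}$ is the restriction to $\mathbb{S}^{n-1}$ of an ambient linear coordinate, invoke the standard fact that the Riemannian gradient of a restricted function is the tangential projection of its Euclidean gradient, and obtain $\nabla_{\mathbb{S}^{n-1}}x_{m}=e_{m}-x_{m}\mathbf{x}$ in one stroke; the Kronecker identity then falls out from a single inner-product computation. The paper, by contrast, stays entirely inside the explicit spherical-coordinate parametrization (\ref{xm}): it computes $\nabla_{\mathbb{S}^{n-1}}x_{m}$ component by component in the frame $\{\widehat{\theta}_{j}\}$, treats the low-dimensional and boundary cases $m=1,2$ and $m=n$ separately, and then verifies (\ref{nabla1}) and (\ref{nabla2}) by direct expansion and repeated use of $\sin^{2}+\cos^{2}=1$---essentially the ``alternative'' you sketch in your last paragraph. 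Your route is shorter, coordinate-free, and dimension-uniform, at the cost of importing the projection identity $\nabla_{\mathbb{S}^{n-1}}f=P\,\nabla_{\mathbb{R}^{n}}F$ as a black box; the paper's route is self-contained within the explicit formulas already set up in the Preliminaries, which may be preferable for readers who want everything verified in those coordinates, but it is considerably more laborious.
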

\begin{proof}
Lemma \ref{lemkronecker} is
trivial in the dimension $n=2$ and similarly easily verifiable when $n=3$ by the computation
\begin{equation*}
\nabla_{\mathbb{S}^{2}}x_m(\Theta_{2})=
\left\{
\begin{array}{ll}
-\sin{\theta_1}\widehat{{\theta}_{1}}, & \hbox{$m=1$;}\\
\cos{\theta_{1}}\cos{\theta_{2}}\widehat{{\theta}_{1}}
-\sin{\theta_{2}}\widehat{{\theta}_{2}}, & \hbox{$m=2$;} \vspace{0.15 cm}\\
\cos{\theta_{1}}\sin{\theta_{2}}\widehat{{\theta}_{1}}
+\cos{\theta_{2}}\widehat{{\theta}_{2}}, & \hbox{$m=3$}.
                \end{array}
              \right.
\end{equation*}
\indent Suppose $n\geq 4$. Again, the identity (\ref{nabla1})
is easy to prove when $m=1,2$, and so is
the identity (\ref{nabla2}) when $1\leq \ell,m\leq 2$.
Observe that, for all $n\geq 4$,
\begin{equation*}
x_m(\Theta_{n-1})=
x_m(\Theta_{2}),\quad
\nabla_{\mathbb{S}^{n-1}} x_m(\Theta_{n-1})=
\nabla_{\mathbb{S}^{2}}x_m(\Theta_{2}),\quad m=1,2.
\end{equation*}
\indent Fix $m\geq 3$. We get
(\ref{nabla1}) from the calculation
\begin{align}
&\hspace{-1cm}\nonumber
\nabla_{\mathbb{S}^{n-1}}x_m(\Theta_{n-1})
\label{nabla0}=
\left\{
\begin{array}{ll}
\sum_{j=1}^{m-2}\cos{\theta_j}
\prod_{k=j+1}^{m-1}
\sin{\theta_k}\cos{\theta_m}\widehat{{\theta}_{j}}+ & \hbox{} \vspace{0.15 cm}\\
+\cos{\theta_{m-1}}\cos{\theta_m}\widehat{{\theta}_{m-1}}-
\sin{\theta_m}\widehat{{\theta}_{m}}, & \hbox{$3\leq m\leq n-1$;} \vspace{0.15 cm}\\
\sum_{j=1}^{n-2}\cos{\theta_j}
\prod_{k=j+1}^{n-1}
\sin{\theta_k}\widehat{{\theta}_{j}}+
\cos{\theta_{n-1}}\widehat{{\theta}_{n-1}}, & \hbox{$m=n$},
                \end{array}
              \right.
\end{align}
and the orthonormality of the set $\left\{\widehat{{\theta}_{j}}\right\}_{j=1}^{n-1}$
along with the identity
\begin{equation}\label{sumeq1}
U(\Theta_{m-1}):=\sum_{j=1}^{m}x^{2}_{j}
\left(\theta_{m-1},...,\theta_{1} \right)=1.
%\prod_{j=1}^{m-1}\sin^2{\theta_j}+
%\sum_{j=1}^{m-2}\cos^2{\theta_j}
%\prod_{k=j+1}^{m-1}\sin^2{\theta_k}+
%\cos^2{\theta_{m-1}}=1.
\end{equation}
Indeed, one can write
\begin{equation*}
x^2_m(\Theta_{n-1})+
|\nabla_{\mathbb{S}^{n-1}}x_m(\Theta_{n-1})|^2=
\left\{
  \begin{array}{ll}
    U(\Theta_{m-1})\cos^2{\theta_m}+
\sin^2{\theta_{m}}, & \hbox{$m\leq n-1$;} \\
    U(\Theta_{n-1}), & \hbox{$m=n$.}
  \end{array}
\right.
\end{equation*}
Now, we turn to the identity (\ref{nabla2}). Assume, losing no generality, that $1\leq\ell<m$.
Then, tedious yet straightforward computation uncovers that
\begin{eqnarray*}
-\nabla_{\mathbb{S}^{n-1}}x_{1}(\Theta_{n-1})\cdot
\nabla_{\mathbb{S}^{n-1}}x_m(\Theta_{n-1})&=& \left\{
  \begin{array}{ll}
\cos{\theta_{1}}\prod_{j=1}^{m-1}\sin{\theta_{j}}
\cos{\theta_{m}}, & \hbox{$3\leq m\leq n-1$;} \vspace{0.12 cm} \\
\cos{\theta_{1}}
\prod_{j=1}^{n-1}\sin{\theta_{j}}, & \hbox{$m=n$.}
  \end{array}
\right.\vspace{0.12 cm} \\
&=&x_{1}(\Theta_{n-1})x_{m}(\Theta_{n-1}),
\end{eqnarray*}
and when $2\leq \ell\leq m-1$ we have
\begin{eqnarray*}
&&\nabla_{\mathbb{S}^{n-1}}x_{\ell}(\Theta_{n-1})\cdot
\nabla_{\mathbb{S}^{n-1}}x_m(\Theta_{n-1})\\ &=&
\left(\sum_{j=1}^{\ell-2}
\cos^{2}{\theta_{j}} \prod_{j=k+1}^{\ell-1}\sin^{2}{\theta_{k}}
-\sin^{2}{\theta_{\ell-1}}
\right)\cos{\theta_{\ell}}
\left\{ \begin{array}{ll}
\prod_{j=\ell}^{m-1}\sin{\theta_{j}}\cos{\theta_{m}}, & \hbox{$m\leq n-1$;}\vspace{0.12 cm} \\
\prod_{j=\ell}^{n-1}\sin{\theta_{j}}, & \hbox{$m=n$}
  \end{array}
\right.\vspace{0.12 cm} \\
\vspace{0.12 cm} \\ &=&
\prod_{j=1}^{\ell-1}\sin^{2}{\theta_{j}}
\cos{\theta_{\ell}}
\left\{
  \begin{array}{ll}
\prod_{j=\ell}^{m-1}\sin{\theta_{j}}\cos{\theta_{m}}, & \hbox{$m\leq n-1$;}\vspace{0.12 cm} \\
\prod_{j=\ell}^{n-1}\sin{\theta_{j}}, & \hbox{$m=n$}
  \end{array}
\right.\\
&=&-x_{\ell}(\Theta_{n-1})x_{m}(\Theta_{n-1}).
\end{eqnarray*}
\end{proof}
\indent The next lemma shows that the components
$x_{m}$
are eigenfunctions of the Laplace - Beltrami operator
(\ref{lapbelt}):
\begin{lem}\label{deltasnlm}
\begin{equation}\label{deltasn}
\Delta_{\mathbb{S}^{n-1}}x_{m}(\Theta_{n-1})
=-(n-1)x_{m}(\Theta_{n-1}),\; n\geq 2.
\end{equation}
\end{lem}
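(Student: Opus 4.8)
The plan is to prove (\ref{deltasn}) by induction on the dimension $n$, exploiting the recursive structure of both the Laplace--Beltrami operator (\ref{lapbelt}) and the coordinates (\ref{xm}). The base case $n=2$ is immediate: on $\mathbb{S}^{1}$ one has $\Delta_{\mathbb{S}^{1}}=\partial^{2}_{\theta_{1}}$, $x_{1}=\cos\theta_{1}$, $x_{2}=\sin\theta_{1}$, and differentiating twice gives $\Delta_{\mathbb{S}^{1}}x_{m}=-x_{m}=-(n-1)x_{m}$.

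For the inductive step, first I would read off from (\ref{lapbelt}) the splitting
\begin{equation*}
\Delta_{\mathbb{S}^{n-1}}=\frac{1}{\sin^{n-2}\theta_{1}}\frac{\partial}{\partial\theta_{1}}\left(\sin^{n-2}\theta_{1}\frac{\partial}{\partial\theta_{1}}\right)+\frac{1}{\sin^{2}\theta_{1}}\,\Delta_{\mathbb{S}^{n-2}},
\end{equation*}
where $\Delta_{\mathbb{S}^{n-2}}$ denotes the Laplace--Beltrami operator (\ref{lapbelt}) of the sphere $\mathbb{S}^{n-2}$ written in the angles $\theta_{2},\dots,\theta_{n-1}$; this identification is obtained by shifting all indices down by one and matching the surviving terms of (\ref{lapbelt}). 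In parallel, I would observe from (\ref{xm}) that $x_{1}=\cos\theta_{1}$ is independent of $\theta_{2},\dots,\theta_{n-1}$, while for $2\le m\le n$ one has the factorization $x_{m}(\Theta_{n-1})=\sin\theta_{1}\,y_{m-1}(\theta_{2},\dots,\theta_{n-1})$, where $y_{1},\dots,y_{n-1}$ are exactly the Cartesian coordinate functions of $\mathbb{S}^{n-2}$ in the angles $\theta_{2},\dots,\theta_{n-1}$.

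With these two facts in hand the computation is short. Writing $L_{1}$ for the first (radial in $\theta_{1}$) summand of the operator above, for $m=1$ only $L_{1}$ acts, and $\frac{1}{\sin^{n-2}\theta_{1}}\partial_{\theta_{1}}(\sin^{n-2}\theta_{1}\,\partial_{\theta_{1}}\cos\theta_{1})=-(n-1)\cos\theta_{1}$. For $2\le m\le n$, since $y_{m-1}$ does not depend on $\theta_{1}$ and $\sin\theta_{1}$ does not depend on $\theta_{2},\dots,\theta_{n-1}$, the two pieces separate: $L_{1}$ contributes $y_{m-1}\,L_{1}\sin\theta_{1}$ with $L_{1}\sin\theta_{1}=\tfrac{n-2}{\sin\theta_{1}}-(n-1)\sin\theta_{1}$, while the second piece contributes $\tfrac{1}{\sin\theta_{1}}\Delta_{\mathbb{S}^{n-2}}y_{m-1}$. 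Invoking the inductive hypothesis $\Delta_{\mathbb{S}^{n-2}}y_{m-1}=-(n-2)y_{m-1}$ makes the two $\tfrac{n-2}{\sin\theta_{1}}y_{m-1}$ terms cancel, leaving $-(n-1)\sin\theta_{1}\,y_{m-1}=-(n-1)x_{m}$, as required.

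The only delicate point, and the step I would be most careful with, is establishing the two recursive decompositions cleanly: verifying that the lower-order terms of (\ref{lapbelt}) genuinely assemble into $\sin^{-2}\theta_{1}\,\Delta_{\mathbb{S}^{n-2}}$ under the index shift $\theta_{j}\mapsto\theta_{j+1}$, and that the factor $y_{m-1}$ is precisely a coordinate function of the lower sphere so that the hypothesis applies verbatim. Once this bookkeeping is pinned down, everything reduces to the one-line identity for $L_{1}\sin\theta_{1}$ and a cancellation. Alternatively one can bypass the induction entirely: each $x_{m}$ is the restriction to $\mathbb{S}^{n-1}$ of a linear, hence harmonic and degree-one homogeneous, function on $\mathbb{R}^{n}$, so the polar decomposition $\Delta_{\mathbb{R}^{n}}=\partial_{r}^{2}+\tfrac{n-1}{r}\partial_{r}+\tfrac{1}{r^{2}}\Delta_{\mathbb{S}^{n-1}}$ forces the eigenvalue $-(n-1)$; I prefer the inductive route here since it stays within the explicit formulas already set up in the paper.
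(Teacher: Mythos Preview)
Your argument is correct, but it is organized differently from the paper's. The paper fixes the dimension $n$ and proves (\ref{deltasn}) by induction on the coordinate index $m$: writing $\Delta_{\mathbb{S}^{n-1}}=\sum_{\ell=1}^{n-1}\Delta_{\ell}$, it shows $\sum_{\ell=1}^{m}\Delta_{\ell}x_{m}=-(n-1)x_{m}$, passing from $m$ to $m+1$ via the ratio $\delta_{m}=x_{m+1}/x_{m}=\sin\theta_{m}\cos\theta_{m+1}/\cos\theta_{m}$ and reducing the step to a concrete two-variable identity. You instead induct on the dimension $n$, using the recursive splitting $\Delta_{\mathbb{S}^{n-1}}=L_{1}+\sin^{-2}\theta_{1}\,\Delta_{\mathbb{S}^{n-2}}$ and the factorization $x_{m}=\sin\theta_{1}\,y_{m-1}$. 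Your route is cleaner and more conceptual: once the two recursions are checked (and your verification of $L_{1}\sin\theta_{1}=(n-2)/\sin\theta_{1}-(n-1)\sin\theta_{1}$ is correct), the result drops out with no auxiliary identities to chase. The paper's approach, by contrast, stays entirely inside the fixed-$n$ explicit formulas and avoids having to identify the lower-dimensional Laplacian, at the cost of more bookkeeping. The alternative you mention---restricting a linear harmonic function and reading off the eigenvalue from the polar decomposition of $\Delta_{\mathbb{R}^{n}}$---is of course the shortest proof of all, and would also be acceptable here.
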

\begin{proof}
Write
\begin{equation*}
\Delta_{\mathbb{S}^{n-1}}=
\sum_{\ell=1}^{n-1}\Delta_{\ell},
\end{equation*}
where $\Delta_{\ell}$ are the differential operators
\begin{eqnarray*}
\Delta_{1}&:=&\frac{1}{\sin^{n-2}{\theta_{1}}}
 \frac{\partial }{\partial \theta_1}
 \left(\sin^{n-2}{\theta_{1}} \frac{\partial }{\partial \theta_1} \right), \\
\Delta_{\ell}&:=&\frac{1}{\prod_{j=1}^{\ell-1}
\sin^{2}{\theta_{j}}}
\frac{1}{\sin^{n-\ell-1}{\theta_{\ell}}}
\frac{\partial}{\partial \theta_{\ell}}\left(\sin^{n-\ell-1}{\theta_{\ell}}
\frac{\partial}{\partial \theta_{\ell}}\right),\;2\leq \ell\leq n-1.
\end{eqnarray*}
Then, to prove (\ref{deltasn}), it suffices to establish
that
\begin{eqnarray}
\label{suffest} \sum_{\ell=1}^{m}
\Delta_{\ell}x_{m}(\Theta_{n-1})&=&
-(n-1)x_{m}(\Theta_{n-1}),\;1\leq m\leq n-1,\\
\label{suffestn} \sum_{\ell=1}^{n-1}
\Delta_{\ell}x_{n}(\Theta_{n-1})&=&
-(n-1)x_{n}(\Theta_{n-1}).
\end{eqnarray}
Straightforward calculations affirm (\ref{suffest}) when $m=1$. We prove (\ref{suffest}) by induction. Assume (\ref{suffest}) holds true for some $1\leq m\leq n-2$. Let us define
\begin{equation*}
\delta_{m}(\Theta_{n-1}):=
\frac{x_{m+1}(\Theta_{n-1})}{x_{m}(\Theta_{n-1})}=
\frac{\sin{\theta_{m}}\cos{\theta_{m+1}}}{\cos{\theta_{m}}}.
\end{equation*}
Now, since
\begin{eqnarray*}
\sum_{\ell=1}^{m-1}
\Delta_{\ell}x_{m+1}
&=&\delta_{m}\sum_{\ell=1}^{m-1}
\Delta_{\ell}x_{m}\\
&=&\delta_{m}\sum_{\ell=1}^{m}
\Delta_{\ell}x_{m}-\delta_{m}\Delta_{m}x_{m}\\
&=&-(n-1)x_{m+1}-\delta_{m}\Delta_{m}x_{m},
\end{eqnarray*}
then we have
\begin{equation*}
\sum_{\ell=1}^{m+1}
\Delta_{\ell}x_{m+1}
=-(n-1)x_{m+1}-\delta_{m}\Delta_{m}x_{m}
+\left(\Delta_{m}+\Delta_{m+1}\right)
x_{m+1}.
\end{equation*}
Consequently, what remains to prove is
 \begin{equation}\label{rem1}
-\delta_{m}\Delta_{m}x_{m}
+\left(\Delta_{m}+\Delta_{m+1}\right)
x_{m+1}=0.
 \end{equation}
Calculating further, we find
\begin{eqnarray*}
\Delta_{m}x_{m+1}&=&
\delta_{m}\Delta_{m}x_{m}+x_{m}\Delta_{m}\delta_{m}+
\frac{2}{\prod_{j=1}^{m-1}\sin^{2}{\theta_{j}}}
\frac{\partial x_{m}}{\partial \theta_{m}}
\frac{\partial \delta_{m}}{\partial \theta_{m}},\\
\Delta_{m+1}x_{m+1}&=&
x_{m}\Delta_{m+1}\delta_{m}.
\end{eqnarray*}
Therefore, (\ref{rem1}) is equivalent to
 \begin{equation}\label{rem2}
\left(\Delta_{m}+\Delta_{m+1}\right)\delta_{m}=
\frac{2\sin{\theta_{m}}}{\prod_{j=1}^{m-1}
\sin^{2}{\theta_{j}}\cos{\theta_{m}}}\frac{\partial \delta_{m}}{\partial \theta_{m}}
 \end{equation}
which is easy to verify. Having proved (\ref{suffest}),
we can exploit its validity for
$m=n-1$ in particular to prove (\ref{suffestn}). Write $x_{n}=x_{n-1}\delta_{n-1}$ with $\delta_{n-1}(\Theta_{n-1}):=
\sin{\theta_{n-1}}/\cos{\theta_{n-1}}$.
Arguing as above, we discover that
\begin{equation*}
\sum_{\ell=1}^{n-1}
\Delta_{\ell}x_{n}
=-(n-1)x_{n}+x_{n-1}\Delta_{n-1}\delta_{n-1}+
\frac{2}{\prod_{j=1}^{n-2}\sin^{2}{\theta_{j}}}
\frac{\partial x_{n-1}}{\partial \theta_{n-1}}
\frac{\partial \delta_{n-1}}{\partial \theta_{n-1}}.
\end{equation*}
This reduces (\ref{suffestn}) to
\begin{equation*}
\frac{\partial^{2} \delta_{n-1}}{\partial \theta^{2}_{n-1}}=2\frac{\sin{\theta_{n-1}}}{
\cos{\theta_{n-1}}}\frac{\partial \delta_{n-1}}{\partial \theta_{n-1}}
\end{equation*}
which is simple to check.
\end{proof}
\begin{lem}\label{lemlamda}
Let $\Phi_{n-1}\in \mathbb{S}^{n-1}$, and
let $\,\lambda(.,\Phi_{n-1}):
\mathbb{S}^{n-1}\rightarrow [-1,1]\,$ be the function defined in (\ref{lmda}).
Then
\begin{eqnarray}
\label{nabllmda}\left|\nabla_{\mathbb{S}^{n-1}}\lambda\right|&=&
\sqrt{1-\lambda^{2}},\\
\label{deltlmda}\Delta_{\mathbb{S}^{n-1}}\lambda&=&-(n-1)
\lambda.
\end{eqnarray}
\end{lem}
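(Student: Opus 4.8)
The plan is to treat the point $\Phi_{n-1}$ as fixed, so that each $x_m(\Phi_{n-1})$ is a constant and $\lambda(\cdot,\Phi_{n-1})=\sum_{m=1}^{n}x_m(\Phi_{n-1})\,x_m(\cdot)$ is simply a linear combination of the coordinate functions $x_m(\Theta_{n-1})$ on $\mathbb{S}^{n-1}$. Since both $\nabla_{\mathbb{S}^{n-1}}$ and $\Delta_{\mathbb{S}^{n-1}}$ are linear operators acting only in the $\Theta_{n-1}$ variables, the whole statement reduces to the two lemmas already established, and no genuinely new computation is required.

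First I would dispose of (\ref{deltlmda}). By linearity of the Laplace--Beltrami operator together with Lemma \ref{deltasnlm},
\[
\Delta_{\mathbb{S}^{n-1}}\lambda=\sum_{m=1}^{n}x_m(\Phi_{n-1})\,\Delta_{\mathbb{S}^{n-1}}x_m(\Theta_{n-1})=-(n-1)\sum_{m=1}^{n}x_m(\Phi_{n-1})\,x_m(\Theta_{n-1})=-(n-1)\lambda,
\]
which is exactly the claim.

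For (\ref{nabllmda}) I would compute $|\nabla_{\mathbb{S}^{n-1}}\lambda|^{2}$ directly. Writing $\nabla_{\mathbb{S}^{n-1}}\lambda=\sum_{m}x_m(\Phi_{n-1})\,\nabla_{\mathbb{S}^{n-1}}x_m(\Theta_{n-1})$ and expanding the inner product produces a double sum over the quantities $\nabla_{\mathbb{S}^{n-1}}x_\ell\cdot\nabla_{\mathbb{S}^{n-1}}x_m$. The crucial step is to merge the two identities of Lemma \ref{lemkronecker} into the single relation $\nabla_{\mathbb{S}^{n-1}}x_\ell\cdot\nabla_{\mathbb{S}^{n-1}}x_m=\delta_{\ell m}-x_\ell(\Theta_{n-1})\,x_m(\Theta_{n-1})$, valid for all $\ell,m$. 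Substituting this, the double sum splits into a diagonal part $\sum_{m}x_m^{2}(\Phi_{n-1})=1$, using that $\Phi_{n-1}\in\mathbb{S}^{n-1}$, and a remaining part that factors as $-\left(\sum_\ell x_\ell(\Phi_{n-1})x_\ell(\Theta_{n-1})\right)\left(\sum_m x_m(\Phi_{n-1})x_m(\Theta_{n-1})\right)=-\lambda^{2}$. Hence $|\nabla_{\mathbb{S}^{n-1}}\lambda|^{2}=1-\lambda^{2}$, and taking the nonnegative square root, which is legitimate since $\lambda$ takes values in $[-1,1]$, yields (\ref{nabllmda}).

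There is no serious obstacle here, as the two preceding lemmas carry all the weight. The only points requiring a little care are recognizing the factorization of the off-diagonal double sum as $\lambda^{2}$ and checking that the signs combine correctly, together with the harmless observation that $|\lambda|\le 1$ makes the square root well defined.
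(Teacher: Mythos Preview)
Your proof is correct and follows essentially the same approach as the paper: both arguments expand $|\nabla_{\mathbb{S}^{n-1}}\lambda|^{2}$ and invoke Lemma~\ref{lemkronecker} to reduce it to $1-\lambda^{2}$, and both obtain (\ref{deltlmda}) immediately from linearity together with Lemma~\ref{deltasnlm}. The only cosmetic difference is that you package the two identities of Lemma~\ref{lemkronecker} into the single relation $\nabla_{\mathbb{S}^{n-1}}x_\ell\cdot\nabla_{\mathbb{S}^{n-1}}x_m=\delta_{\ell m}-x_\ell x_m$, whereas the paper treats the diagonal and off-diagonal sums separately.
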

\begin{proof}
Using Lemma \ref{lemkronecker}, we obtain
\begin{flalign*}
\begin{aligned}
\left|\nabla_{\mathbb{S}^{n-1}}
\lambda(\Theta_{n-1},\Phi_{n-1})\right|^2=&\sum_{i}
x^2_i(\Phi_{n-1})|\nabla_{\mathbb{S}^{n-1}}
x_i(\Theta_{n-1})|^2\\
&+2
\sum_{i\neq j}x_i(\Phi_{n-1})x_j(\Phi_{n-1})
\nabla_{\mathbb{S}^{n-1}} x_i(\Theta_{n-1})\cdot\nabla_{\mathbb{S}^{n-1}}
x_j(\Theta_{n-1})\\
=&\sum_{i}
x^2_i(\Phi_{n-1})-\sum_{i}
x^2_i(\Phi_{n-1})x^2_i(\Theta_{n-1})\\
&-2
\sum_{i\neq j}x_i(\Phi_{n-1})
x_i(\Theta_{n-1})x_j(\Phi_{n-1})
x_j(\Theta_{n-1})\\
=&1-\lambda^{2}(\Theta_{n-1},\Phi_{n-1}).
\end{aligned}&&&
\end{flalign*}
This shows (\ref{nabllmda}). We also get (\ref{deltlmda}) as a direct consequence of Lemma \ref{deltasnlm}, since
$\lambda(\Theta_{n-1},\Phi_{n-1})$ is a linear
combination of eigenfunctions of $\Delta_{\mathbb{S}^{n-1}}$ that all correspond
to the eigenvalue $-(n-1)$.
\end{proof}
\begin{lem}\label{gradlap}
Let $\Phi_{n-1}$ be a point on the sphere
$\mathbb{S}^{n-1}$, and let $\,d(.,\Phi_{n-1}):
\mathbb{S}^{n-1}\rightarrow [0,\pi]\,$ be the geodesic distance from $\Phi_{n-1}$ on $\mathbb{S}^{n-1}$
defined in (\ref{gdsc}).
Then
\begin{eqnarray}
\label{nablagdsc}
\left|\nabla_{\mathbb{S}^{n-1}}d\right|&=&
1,\\
\label{deltagdsc}\Delta_{\mathbb{S}^{n-1}}d
&=&(n-2)
\frac{\cos{d}}{\sin{d}}.
\end{eqnarray}
\end{lem}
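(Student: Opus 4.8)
The plan is to obtain both identities of Lemma~\ref{gradlap} from Lemma~\ref{lemlamda} by regarding $d$ as a smooth function of $\lambda$, namely $d=\arccos\lambda$, and applying the first- and second-order chain rules for the Laplace--Beltrami operator. I would work on the open set where $\lambda\in(-1,1)$, equivalently $d\in(0,\pi)$, which is precisely where $d$ is smooth; at the two exceptional points $\lambda=\pm1$ (the pole $\Phi_{n-1}$ and its antipode) the geodesic distance fails to be differentiable and the formulas are not expected to hold.

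First, because $d\in[0,\pi]$ we have $\cos d=\lambda$ and $\sin d=\sqrt{1-\lambda^{2}}\geq0$. For the gradient identity (\ref{nablagdsc}) I would use the chain rule: writing $g(s)=\arccos s$ with $g'(s)=-(1-s^{2})^{-1/2}$ gives
\begin{equation*}
\nabla_{\mathbb{S}^{n-1}}d=g'(\lambda)\,\nabla_{\mathbb{S}^{n-1}}\lambda=-\frac{1}{\sqrt{1-\lambda^{2}}}\,\nabla_{\mathbb{S}^{n-1}}\lambda.
\end{equation*}
Taking norms and inserting $|\nabla_{\mathbb{S}^{n-1}}\lambda|=\sqrt{1-\lambda^{2}}$ from (\ref{nabllmda}) immediately yields $|\nabla_{\mathbb{S}^{n-1}}d|=1$.

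For (\ref{deltagdsc}) I would invoke the second-order chain rule
\begin{equation*}
\Delta_{\mathbb{S}^{n-1}}(g\circ\lambda)=g'(\lambda)\,\Delta_{\mathbb{S}^{n-1}}\lambda+g''(\lambda)\,|\nabla_{\mathbb{S}^{n-1}}\lambda|^{2},
\end{equation*}
which follows from $\Delta_{\mathbb{S}^{n-1}}=\operatorname{div}\nabla_{\mathbb{S}^{n-1}}$ together with the product rule for the surface divergence. Computing $g''(s)=-s(1-s^{2})^{-3/2}$ and substituting both identities of Lemma~\ref{lemlamda}, namely $\Delta_{\mathbb{S}^{n-1}}\lambda=-(n-1)\lambda$ and $|\nabla_{\mathbb{S}^{n-1}}\lambda|^{2}=1-\lambda^{2}$, gives
\begin{equation*}
\Delta_{\mathbb{S}^{n-1}}d=\frac{(n-1)\lambda}{\sqrt{1-\lambda^{2}}}-\frac{\lambda}{\sqrt{1-\lambda^{2}}}=\frac{(n-2)\lambda}{\sqrt{1-\lambda^{2}}}=(n-2)\frac{\cos d}{\sin d}.
\end{equation*}

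The computations are routine once Lemma~\ref{lemlamda} is in hand; the only points deserving care are the justification of the second-order chain rule for $\Delta_{\mathbb{S}^{n-1}}$ and the restriction to $d\in(0,\pi)$, so that $g$ is smooth at $\lambda$ and $\sin d$ does not vanish in the denominator. I do not anticipate any genuine obstacle, since the whole argument simply transfers the already-established properties of $\lambda$ through the smooth change of variable $d=\arccos\lambda$.
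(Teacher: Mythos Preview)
Your proposal is correct and follows essentially the same route as the paper: both compute $\nabla_{\mathbb{S}^{n-1}}d$ via the chain rule applied to $d=\arccos\lambda$ and then obtain $\Delta_{\mathbb{S}^{n-1}}d$ by applying the divergence, substituting the identities of Lemma~\ref{lemlamda}. The only cosmetic difference is that the paper writes out the divergence of $-\nabla_{\mathbb{S}^{n-1}}\lambda/\sqrt{1-\lambda^{2}}$ directly rather than packaging it as the second-order chain rule $\Delta(g\circ\lambda)=g'(\lambda)\Delta\lambda+g''(\lambda)|\nabla\lambda|^{2}$, and your explicit remark restricting to $d\in(0,\pi)$ is a welcome clarification.
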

\begin{proof}
From (\ref{gdsc}), we find
\begin{equation}\label{tkdv}
\nabla_{\mathbb{S}^{n-1}}d(\Theta_{n-1},\Phi_{n-1})=
-\frac{\nabla_{\mathbb{S}^{n-1}}
\lambda(\Theta_{n-1},\Phi_{n-1})}{\sqrt{1-
\lambda^{2}(\Theta_{n-1},\Phi_{n-1})}}.
\end{equation}
Hence (\ref{nablagdsc}) follows from (\ref{nabllmda}).
Taking the divergence of both sides of (\ref{tkdv}),
then substituting for $\nabla_{\mathbb{S}^{n-1}}\lambda$ from (\ref{nabllmda}) and for
$\Delta_{\mathbb{S}^{n-1}}\lambda$ from
(\ref{deltlmda}), we deduce that
\begin{equation*}
\begin{split}
\Delta_{\mathbb{S}^{n-1}}d(\Theta_{n-1},\Phi_{n-1})=&
-\frac{\Delta_{\mathbb{S}^{n-1}}
\lambda(\Theta_{n-1},\Phi_{n-1})}{\sqrt{1-
\lambda^{2}(\Theta_{n-1},\Phi_{n-1})}}\\&-
\frac{\lambda(\Theta_{n-1},\Phi_{n-1})
|\nabla_{\mathbb{S}^{n-1}}
\lambda(\Theta_{n-1},\Phi_{n-1})|^2}{\left(1-
\lambda^{2}(\Theta_{n-1},\Phi_{n-1})\right)^{\frac{3}{2}}}\\
=&
\frac{(n-2)
\lambda(\Theta_{n-1},\Phi_{n-1})}{\sqrt{1-
\lambda^{2}(\Theta_{n-1},\Phi_{n-1})}},
\end{split}
\end{equation*}
which yields (\ref{deltagdsc}) in the light of (\ref{gdsc}).
\end{proof}
\section{Subcritical $L^{p}$ Hardy inequalities}
Let $\mathbb{S}^{n-1}$ be the unit sphere in
$\mathbb{R}^{n}$, $n\geq 4$. Let $1< p<n-1$ and consider the following nonlinear positive functionals on $W^{1,p}(\mathbb{S}^{n-1}\longrightarrow \mathbb{R})$:
\begin{eqnarray*}
S_{p}(u) &:=& \int_{\mathbb{S}^{n-1}}
\frac{|u|^{p}}{\sin^{p}{d}}d\sigma_{n-1}, \\
\widetilde{S}_{p}(u) &:=& \int_{\mathbb{S}^{n-1}}
\frac{|u|^{p}}{\sin^{p-2}{d}}d\sigma_{n-1}, \\
  T_{p}(u) &:=& \int_{\mathbb{S}^{n-1}}
\frac{|u|^{p}}{|\tan{d}|^{p}}d\sigma_{n-1}, \\
  \widetilde{T}_{p}(u) &:=& \int_{\mathbb{S}^{n-1}}
\frac{|u|^{p}}{|\tan{d}|^{p-2}}d\sigma_{n-1}, \\
F_{p}(u) &:=& \int_{\mathbb{S}^{n-1}}
|\nabla_{\mathbb{S}^{n-1}} u|^{p}d\sigma_{n-1}, \\
G_{p}(u) &:=& \int_{\mathbb{S}^{n-1}}
|\nabla_{\mathbb{S}^{n-1}} u|^{p}|\cos{d}|^{p} d\sigma_{n-1}.
\end{eqnarray*}
Define also the constant
\begin{equation*}
\alpha_{n,p}:=\frac{n-p-1}{p}.
\end{equation*}
\begin{rem}\label{rmcon}
Formula (\ref{frml}) makes it clear that the integrals
$\displaystyle \int_{\mathbb{S}^{n-1}}
\frac{|u|^{p}}
{|\tan{d}|^{p}}d\sigma_{n-1}$,
$\;\displaystyle\int_{\mathbb{S}^{n-1}}
\frac{|u|^{p}}
{\sin^{p}{d}}d\sigma_{n-1}$ are convergent
when $u$ is continuous. Indeed,
recalling that $d(\Theta_{n-1},\Phi_{n-1})=\arccos{\left(
\Theta_{n-1}\cdot\Phi_{n-1}\right)}$, $\Phi_{n-1}
\in \mathbb{S}^{n-1}$, we immediately see
\begin{equation*}
\begin{split}
&\int_{\mathbb{S}^{n-1}}
\frac{d\sigma_{n-1}}
{|\tan{d}|^{p}}<
\int_{\mathbb{S}^{n-1}}
\frac{d\sigma_{n-1}}
{\sin^{p}{d}}
=\int_{\mathbb{S}^{n-1}}
\frac{d\sigma_{n-1}}
{\left(1-\left(\Theta_{n-1}\cdot\Phi_{n-1}
\right)^{2}\right)^{p/2}}\\
&=2C_{n}
\int_{0}^{1}\left(1-t^2\right)^{\frac{
n-p-3}{2}}dt
\end{split}
\end{equation*}
which exists for $p<n-1$.
\end{rem}
We show that the functionals $T_{p}$, $\widetilde{T}_{p}$, $S_{p}$, $\widetilde{S}_{p}$
are all well-defined and related by the following
$L^{p}$ inequalities of Hardy type:
 \begin{thm}(Subcritical $L^{p}$ Hardy inequalities)\\
Suppose $u\in W^{1,p}(\mathbb{S}^{n-1}\longrightarrow \mathbb{R})$, $n\geq 4$. Then
$\frac{u}{\sin{d}}\in L^{p}(\mathbb{S}^{n-1})$,
when $1<p<n-1$, and $\frac{u}{|\tan{d}|}\in L^{p}(\mathbb{S}^{n-1})$ when $2\leq p<n-1$.
Moreover
\begin{eqnarray}
\label{f1} \alpha^{p}_{n,p}S_{p}(u)&\leq&G_{p}(u) +
(n-p)\alpha^{p-1}_{n,p}\widetilde{S}_{p}(u),\quad
1<p<n-1,\\
\label{f3}
\alpha^{p}_{n,p}T_{p}(u)&\leq&F_{p}(u) +
(p-1)\alpha^{p-1}_{n,p}\widetilde{T}_{p}(u),\quad
2\leq p<n-1.
\end{eqnarray}
\end{thm}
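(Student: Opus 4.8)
The plan is to obtain both inequalities from one integration-by-parts identity generated by a radial vector field, and then to close the estimate with a single application of Young's inequality whose free parameter is optimized. First I would introduce, on $\mathbb{S}^{n-1}\setminus\{\Phi_{n-1},-\Phi_{n-1}\}$, the vector fields
\[
W_{1}:=\frac{\cos d}{\sin^{p-1}d}\,\nabla_{\mathbb{S}^{n-1}}d,
\qquad
W_{2}:=\frac{\cos d\,|\cos d|^{p-2}}{\sin^{p-1}d}\,\nabla_{\mathbb{S}^{n-1}}d .
\]
Writing $\nabla_{\mathbb{S}^{n-1}}\cdot\big(g(d)\nabla_{\mathbb{S}^{n-1}}d\big)=g'(d)\,|\nabla_{\mathbb{S}^{n-1}}d|^{2}+g(d)\,\Delta_{\mathbb{S}^{n-1}}d$ and substituting $|\nabla_{\mathbb{S}^{n-1}}d|=1$, $\Delta_{\mathbb{S}^{n-1}}d=(n-2)\cos d/\sin d$ from Lemma~\ref{gradlap}, a direct differentiation gives
\[
\nabla_{\mathbb{S}^{n-1}}\cdot W_{1}=\frac{n-p-1}{\sin^{p}d}-\frac{n-p}{\sin^{p-2}d},
\qquad
\nabla_{\mathbb{S}^{n-1}}\cdot W_{2}=\frac{n-p-1}{|\tan d|^{p}}-\frac{p-1}{|\tan d|^{p-2}} .
\]
These are exactly the densities of $(n-p-1)S_{p}-(n-p)\widetilde S_{p}$ and $(n-p-1)T_{p}-(p-1)\widetilde T_{p}$, which is what makes the constants in (\ref{f1}) and (\ref{f3}) appear.

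Next, working first with $u\in C^{\infty}(\mathbb{S}^{n-1})$, I would test these divergences against $|u|^{p}$. Because $W_{1},W_{2}$ are singular at $\Phi_{n-1}$ and its antipode, the divergence theorem must be applied on $\mathbb{S}^{n-1}$ with two small geodesic balls of radius $\varepsilon$ removed; the resulting boundary integrals over the geodesic spheres have integrand of size $O(\varepsilon^{-(p-1)})$ against surface measure $O(\varepsilon^{n-2})$, hence are $O(\varepsilon^{\,n-1-p})$ and vanish as $\varepsilon\to0$ precisely because $p<n-1$. Letting $\varepsilon\to0$ yields the identity
\[
(n-p-1)S_{p}(u)-(n-p)\widetilde S_{p}(u)
=-p\!\int_{\mathbb{S}^{n-1}}\!\!|u|^{p-2}u\,\frac{\cos d}{\sin^{p-1}d}\,\nabla_{\mathbb{S}^{n-1}}u\cdot\nabla_{\mathbb{S}^{n-1}}d\;d\sigma_{n-1},
\]
and the analogous one with $W_{2}$, $T_{p}$, $\widetilde T_{p}$.

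To close, I would use $|\nabla_{\mathbb{S}^{n-1}}u\cdot\nabla_{\mathbb{S}^{n-1}}d|\le|\nabla_{\mathbb{S}^{n-1}}u|$ (from $|\nabla_{\mathbb{S}^{n-1}}d|=1$) to bound the right-hand side of the first identity by $p\int(|u|/\sin d)^{p-1}|\cos d|\,|\nabla_{\mathbb{S}^{n-1}}u|\,d\sigma_{n-1}$, and then apply the elementary inequality $p\,a^{p-1}b\le(p-1)\varepsilon^{p'}a^{p}+\varepsilon^{-p}b^{p}$ (with $p'=p/(p-1)$) to $a=|u|/\sin d$, $b=|\cos d|\,|\nabla_{\mathbb{S}^{n-1}}u|$. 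This produces $(p-1)\varepsilon^{p'}S_{p}(u)+\varepsilon^{-p}G_{p}(u)$ on the right. Choosing $\varepsilon^{p}=\alpha_{n,p}^{p-1}$, so that $\varepsilon^{p'}=\alpha_{n,p}$, and using $n-p-1=p\,\alpha_{n,p}$, the coefficient of $S_{p}(u)$ collapses to $p\alpha_{n,p}^{p}-(p-1)\alpha_{n,p}^{p}=\alpha_{n,p}^{p}$, which is exactly (\ref{f1}). The same scheme with $W_{2}$, $a=|u|/|\tan d|$, $b=|\nabla_{\mathbb{S}^{n-1}}u|$ gives (\ref{f3}).

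Finally, I would pass from $C^{\infty}$ to $W^{1,p}$ by density, approximating $u$ by smooth $u_{k}\to u$ in $W^{1,p}$ and almost everywhere. The right-hand sides converge ($G_{p},F_{p}$ since $\nabla_{\mathbb{S}^{n-1}}u_{k}\to\nabla_{\mathbb{S}^{n-1}}u$ in $L^{p}$ and the weights are bounded, and $\widetilde S_{p},\widetilde T_{p}$ via the elementary bounds $\widetilde S_{p}(u)\le\tfrac{p-2}{p}S_{p}(u)+\tfrac{2}{p}\|u\|_{p}^{p}$ when $p\ge2$ and $\widetilde S_{p}(u)\le\|u\|_{p}^{p}$ when $p<2$), while Fatou's lemma on the left-hand side delivers both the membership $u/\sin d\in L^{p}$, $u/|\tan d|\in L^{p}$ and the inequalities in the limit; finiteness of all integrals for smooth $u$ is guaranteed by Remark~\ref{rmcon}. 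I expect the genuine obstacle to be the divergence-theorem step: one must justify the excision and prove the boundary terms vanish, and this is exactly where the hypothesis $p<n-1$ is consumed. A secondary point specific to (\ref{f3}) is that $W_{2}$ must be $C^{1}$ across the equator $d=\pi/2$; since $x\mapsto x|x|^{p-2}$ is continuously differentiable at $0$ only for $p\ge2$, this is precisely where the restriction $2\le p<n-1$ originates.
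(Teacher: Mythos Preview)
Your proposal is correct and follows essentially the same route as the paper. The only organizational difference is that you package the integration by parts as the divergence of the radial vector fields $W_{1},W_{2}$ and compute $\nabla_{\mathbb{S}^{n-1}}\!\cdot W_{j}$ up front, whereas the paper multiplies the identities $\Delta_{\mathbb{S}^{n-1}}\sin d=-\sin d+(n-2)\cos^{2}d/\sin d$ and $\Delta_{\mathbb{S}^{n-1}}d=(n-2)\cot d$ by $|u|^{p}/\sin^{p-1}d$ and $|u|^{p}/(|\tan d|^{p-2}\tan d)$ respectively, integrates by parts, and only afterwards collects the terms (using $\cos^{2}d=1-\sin^{2}d$, resp.\ $\sec^{2}d=1+\tan^{2}d$); since $W_{1}=\sin^{1-p}d\,\nabla_{\mathbb{S}^{n-1}}\sin d$, the two computations are algebraically identical. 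The subsequent H\"older--Young step with the free parameter $\beta$ and its optimization at $\beta^{p}=\alpha_{n,p}^{p-1}$ coincide verbatim. Your discussion of the excision of geodesic balls and the $O(\varepsilon^{\,n-1-p})$ vanishing of the boundary terms, and your remark that $p\ge 2$ is forced by the differentiability of $x\mapsto x|x|^{p-2}$ across the equator, are points the paper uses implicitly (it simply invokes the divergence theorem and says ``the restriction $2\le p<n-1$ is necessary to make sense of $\nabla_{\mathbb{S}^{n-1}}|\cos d|^{p-2}\cos d$''), so here you are slightly more explicit than the paper.

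One small caveat on your density step: for $p\ge 2$ the bound $\widetilde S_{p}\le \tfrac{p-2}{p}S_{p}+\tfrac{2}{p}\|u\|_{p}^{p}$ involves $S_{p}$ itself, so it does not directly yield convergence of $\widetilde S_{p}(u_{k})$ before you know anything about $S_{p}(u_{k})$. This is easily repaired (e.g.\ via H\"older against the Sobolev embedding $W^{1,p}\hookrightarrow L^{p^{*}}$, since $1/\sin^{p-2}d\in L^{r}$ for $r<(n-1)/(p-2)$), and the paper itself offers no detail here either, writing only ``using a density argument, we may assume $u\in C^{\infty}(\mathbb{S}^{n-1})$''.
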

\begin{proof}
Let us start with the inequality (\ref{f1}). Using a density argument, we may assume $u\in C^{\infty}\left(\mathbb{S}^{n-1}\right)$. Recalling the identities
(\ref{nablagdsc}) and (\ref{deltagdsc})
in Lemma \ref{gradlap}, we can compute
\begin{equation}\label{deltsn}
\Delta_{\mathbb{S}^{n-1}}
\sin{d}=-\sin{d}
+(n-2)\frac{\cos^{2}{d}}{\sin{d}}.
\end{equation}
Integrating both sides of (\ref{deltsn})
against $|u|^{p}/\sin^{p-1}{d}$ over $\mathbb{S}^{n-1}$, then employing the divergence theorem, we obtain
\begin{align}
\nonumber
&(n-2)\int_{\mathbb{S}^{n-1}}
\frac{|u|^p}{\sin^{p}{d}}\cos^{2}{d}\,
d\sigma_{n-1}-\int_{\mathbb{S}^{n-1}}
\frac{|u|^p}{\sin^{p-2}{d}}
d\sigma_{n-1}
\\ \nonumber&=
\int_{\mathbb{S}^{n-1}}\frac{|u|^p}{\sin^{p-1}{d}}
\Delta_{\mathbb{S}^{n-1}}
\sin{d}\,d\sigma_{n-1}\\
\nonumber
&=-\int_{\mathbb{S}^{n-1}}
\nabla_{\mathbb{S}^{n-1}} \left(\frac{|u|^p}{\sin^{p-1}{d}}\right)\cdot
\nabla_{\mathbb{S}^{n-1}} \sin{d}\,
d\sigma_{n-1}\\
&=
\nonumber
\int_{\mathbb{S}^{n-1}}
\frac{-p|u|^{p-2}u \nabla_{\mathbb{S}^{n-1}} u
\cdot \nabla_{\mathbb{S}^{n-1}}\sin{d}}
{\sin^{p-1}{d}}
d\sigma_{n-1}+\\&
\label{prtsf1}
\;\;\;+(p-1)\int_{\mathbb{S}^{n-1}}
\frac{|u|^p}{\sin^{p}{d}}\cos^{2}{d}\,
d\sigma_{n-1}.
\end{align}
Observe that we simplified the latter integral using the fact $|\nabla_{\mathbb{S}^{n-1}}d|=1$. So far, it suffices to require that $p>1$ to make sense of
the gradient of $|u|^{p}$. Invoking H\"{o}lder's inequality then applying Young's inequality and using (\ref{nablagdsc}) once more, we can bound
\begin{align}
\nonumber
&\int_{\mathbb{S}^{n-1}}
\frac{-p|u|^{p-2}u \nabla_{\mathbb{S}^{n-1}} u
\cdot \nabla_{\mathbb{S}^{n-1}} \sin{d}}
{\sin^{p-1}{d}}
d\sigma_{n-1}\\
&\nonumber \leq
p\left(\int_{\mathbb{S}^{n-1}}
\frac{|u|^{p}}
{|\sin{d}|^{p}}
d\sigma_{n-1}\right)^{\frac{p-1}{p}}
\left(
\int_{\mathbb{S}^{n-1}}
|\nabla_{\mathbb{S}^{n-1}} u\cdot \nabla_{\mathbb{S}^{n-1}} \sin{d}|^{p}
d\sigma_{n-1}\right)^{\frac{1}{p}}
\\
&\label{yngf1f1} \leq
(p-1)\beta^{\frac{p}{p-1}}\int_{\mathbb{S}^{n-1}}
\frac{|u|^{p}}
{|\sin{d}|^{p}}
d\sigma_{n-1}+
\frac{1}{\beta^{p}}
\int_{\mathbb{S}^{n-1}}
|\nabla_{\mathbb{S}^{n-1}} u|^{p}|\cos{d}|^{p}
d\sigma_{n-1},
\end{align}
with $\beta>0$ as yet undetermined. Plugging the estimate (\ref{yngf1f1}) into the inequality (\ref{prtsf1}) then rearranging gives
\begin{align}
\nonumber
&(n-p-1)\int_{\mathbb{S}^{n-1}}
\frac{|u|^p}{\sin^{p}{d}}\cos^{2}{d}\,
d\sigma_{n-1}-(p-1)\beta^{\frac{p}{p-1}}
\int_{\mathbb{S}^{n-1}}
\frac{|u|^p}{\sin^{p}{d}}d\sigma_{n-1}\\
&\label{prtsf1fn}
\leq\frac{1}{\beta^{p}}
\int_{\mathbb{S}^{n-1}}
|\nabla_{\mathbb{S}^{n-1}} u|^{p}|\cos{d}|^{p}
d\sigma_{n-1}+
\int_{\mathbb{S}^{n-1}}\frac{|u|^p}{\sin^{p-2}{d}}
d\sigma_{n-1}.
\end{align}
Note here that Remark \ref{rmcon} justifies this manipulation of the terms of (\ref{prtsf1}).
We proceed from (\ref{prtsf1fn}) by simply replacing the factor $\cos^{2}{d}$
by $1-\sin^{2}{d}$ in the first integral of
to get
\begin{align}
\nonumber
&\left(n-p-1-(p-1)\beta^{\frac{p}{p-1}}\right)\beta^{p}
\int_{\mathbb{S}^{n-1}}
\frac{|u|^p}{|\sin{d}|^{p}}
d\sigma_{n-1}
\\
&\label{prtsf1fnfn2}
\leq
\int_{\mathbb{S}^{n-1}}
|\nabla_{\mathbb{S}^{n-1}} u|^{p}|\cos{d}|^{p}
d\sigma_{n-1}+\beta^{p}(n-p)
\int_{\mathbb{S}^{n-1}}\frac{|u|^p}{\sin^{p-2}{d}}
d\sigma_{n-1}.
\end{align}
The optimal value of $\beta$ for (\ref{prtsf1fnfn2}) is easily determined through finding the maximum point $t_{*}$ of the function
$t\mapsto t\left(n-p-1-(p-1)t^{\frac{1}{p-1}}\right)$
on $[0,+\infty[$. We find $t_{*}=\left(\frac{n-p-1}{p}\right)^{\frac{p-1}{p}}$.
Hence the inequality (\ref{prtsf1fnfn2}) takes the form
(\ref{f1}).\\
\indent With the exception of some technical details, the proof of (\ref{f3}) is similar to that of (\ref{f1}). Instead of using (\ref{deltsn}), we capitalize on (\ref{deltagdsc}). Let $2\leq p<n-1$. Integration by parts on $\mathbb{S}^{n-1}$ yields
\begin{eqnarray}
\nonumber \int_{\mathbb{S}^{n-1}}\frac{|u|^{p}}{|\tan{d}|^{p}}
d\sigma_{n-1}&=&\frac{1}{n-2}
\int_{\mathbb{S}^{n-1}}
\frac{|u|^{p}}{|\tan{d}|^{p-2}\tan{d}}
\Delta_{\mathbb{S}^{n-1}} d\, d\sigma_{n-1}\\
\nonumber
&=&
\frac{-1}{n-2}
\int_{\mathbb{S}^{n-1}}\nabla_{\mathbb{S}^{n-1}} d\cdot
\nabla_{\mathbb{S}^{n-1}} \left(
\frac{|u|^{p}}{|\tan{d}|^{p-2}\tan{d}}\right)
d\sigma_{n-1}\\
&=&
\nonumber\frac{1}{n-2}
\int_{\mathbb{S}^{n-1}}
\frac{-p|u|^{p-2}u \nabla_{\mathbb{S}^{n-1}} u
\cdot \nabla_{\mathbb{S}^{n-1}} d}
{|\tan{d}|^{p-2}\tan{d}}
d\sigma_{n-1}+\\
&&\label{prts}+\frac{p-1}{n-2}\int_{\mathbb{S}^{n-1}}
\frac{|u|^{p}}
{|\tan{d}|^{p}}\frac{1}{\cos^{2}{d}}
d\sigma_{n-1}.
\end{eqnarray}
Observe that the restriction $2\leq p<n-1$
is necessary to make sense of $\nabla_{\mathbb{S}^{n-1}}|\cos{d}|^{p-2}\cos{d}$.
It also guarantees the convergence of the integral $\displaystyle
\int_{\mathbb{S}^{n-1}}
\frac{1}{|\tan{d}|^{p}}\frac{1}{\cos^{2}{d}}
d\sigma_{n-1}$. This is inferred by formula (\ref{frml}) that asserts
\begin{equation*}
\int_{\mathbb{S}^{n-1}}
\frac{d\sigma_{n-1}}{\sin^{p}{d}\cos^{2-p}{d}}
=2C_{n}\int_{0}^{1}
\frac{ds}{s^{2-p}(1-s^2)^{-\frac{n-p+3}{2}}}.
\end{equation*}
Since $|\nabla_{\mathbb{S}^{n-1}} d|=1$, then,
applying
H\"{o}lder's inequality followed by
Young's inequality analogously to (\ref{yngf1f1})  gives
\begin{align}
\nonumber
&\int_{\mathbb{S}^{n-1}}
\frac{-p|u|^{p-2}u \nabla_{\mathbb{S}^{n-1}} u
\cdot \nabla_{\mathbb{S}^{n-1}} d}
{|\tan{d}|^{p-2}\tan{d}}
d\sigma_{n-1}\\
\label{yng}
&\leq
(p-1)\beta^{\frac{p}{p-1}}\int_{\mathbb{S}^{n-1}}
\frac{|u|^{p}}
{|\tan{d}|^{p}}
d\sigma_{n-1}+
\beta^{-p}
\int_{\mathbb{S}^{n-1}}
|\nabla_{\mathbb{S}^{n-1}} u|^{p}
d\sigma_{n-1},
\end{align}
for any $\beta>0$. We can also split
\begin{equation}\label{tncs}
\int_{\mathbb{S}^{n-1}}
\frac{|u|^{p}}
{|\tan{d}|^{p}}\frac{1}{\cos^{2}{d}}d\sigma_{n-1}
=\int_{\mathbb{S}^{n-1}}
\frac{|u|^{p}d\sigma_{n-1}}
{|\tan{d}|^{p-2}}
+\int_{\mathbb{S}^{n-1}}
\frac{|u|^{p}d\sigma_{n-1}}
{|\tan{d}|^{p}}.
\end{equation}
Returning to (\ref{prts}) with
(\ref{yng}) and (\ref{tncs}) we deduce that
\begin{align}
\nonumber &\beta^{p}\left(n-p-1-(p-1)\beta^{\frac{p}{p-1}}\right)
\int_{\mathbb{S}^{n-1}}
\frac{|u|^{p}}{|\tan{d}|^{p}}d\sigma_{n-1}\\
\label{f0}
&\leq \int_{\mathbb{S}^{n-1}}
|\nabla_{\mathbb{S}^{n-1}} u|^{p}
d\sigma_{n-1}+\beta^{p}(p-1)\int_{\mathbb{S}^{n-1}}
\frac{|u|^{p}}
{|\tan{d}|^{p-2}}d\sigma_{n-1}.
\end{align}
The optimal value of $\beta$ for (\ref{f0}) is $\alpha^{\frac{p-1}{p}}_{n,p}$. This proves the inequality (\ref{f3}).
\end{proof}
\begin{thm}(Sharpness of the inequalities (\ref{f1})-(\ref{f3}))\\
The constants on both sides of the inequality (\ref{f1}) are sharp. Precisely, we have
\begin{align}
\label{f1shrp1}&\sup_{u\in W^{1,p}\left(\mathbb{S}^{n-1}\right)\setminus\left\{ 0\right\}}
{\frac{\alpha^{p}_{n,p}S_{p}(u)}{G_{p}(u) +
(n-p)\alpha^{p-1}_{n,p}\widetilde{S}_{p}(u)}}=1,\;
1<p<n-1\\
\label{f1shrp2}&\sup_{u\in W^{1,p}\left(\mathbb{S}^{n-1}\right)\setminus\left\{ 0\right\}}
{\frac{\alpha_{n,p}S_{p}(u)-(n-p)
\widetilde{S}_{p}(u)}{G_{p}(u)}}=\alpha^{1-p}_{n,p},\;
1<p<n-1\\
\label{f1shrp3}&\sup_{u\in W^{1,p}\left(\mathbb{S}^{n-1}\right)\setminus\left\{ 0\right\}}
{\frac{\alpha^{p}_{n,p}S_{p}(u)-G_{p}(u)}{
\widetilde{S}_{p}(u)}}=(n-p)\alpha^{p-1}_{n,p},\;
1<p<n/2.
\end{align}
All the constants involved in the inequality (\ref{f3}) are sharp for all $\,2\leq p<n-1$. Precisely
\begin{eqnarray}
\label{f3shrp1}\sup_{u\in W^{1,p}\left(\mathbb{S}^{n-1}\right)\setminus\left\{ 0\right\}}
{\frac{\alpha^{p}_{n,p}T_{p}(u)}{F_{p}(u) +
(p-1)\alpha^{p-1}_{n,p}\widetilde{T}_{p}(u)}}&=&1,\\
\label{f3shrp2}\sup_{u\in W^{1,p}\left(\mathbb{S}^{n-1}\right)\setminus\left\{ 0\right\}}
{\frac{\alpha_{n,p}T_{p}(u)-(p-1)
\widetilde{T}_{p}(u)}{F_{p}(u)}}&=&\alpha^{1-p}_{n,p},\\
\label{f3shrp3}\sup_{u\in W^{1,p}\left(\mathbb{S}^{n-1}\right)\setminus\left\{ 0\right\}}
{\frac{\alpha^{p}_{n,p}T_{p}(u)-F_{p}(u)}{
\widetilde{T}_{p}(u)}}&=&(p-1)\alpha^{p-1}_{n,p}.
\end{eqnarray}
\end{thm}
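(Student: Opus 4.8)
The plan is to prove each of the six identities by combining an upper bound, which comes for free from \eqref{f1} and \eqref{f3}, with a matching lower bound obtained by evaluating the quotient on an explicit one–parameter family of test functions and letting the parameter approach its critical value.

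First I would dispose of the upper bounds. Identities \eqref{f1shrp1} and \eqref{f3shrp1} are just \eqref{f1} and \eqref{f3} rewritten as quotients. Dividing \eqref{f1} by $\alpha_{n,p}^{p-1}G_p$ and rearranging gives $\alpha_{n,p}S_p-(n-p)\widetilde{S}_p\le\alpha_{n,p}^{1-p}G_p$, which is the bound in \eqref{f1shrp2}, while moving $G_p$ to the other side of \eqref{f1} gives $\alpha_{n,p}^pS_p-G_p\le(n-p)\alpha_{n,p}^{p-1}\widetilde{S}_p$, the bound in \eqref{f1shrp3}. The three $T$–identities are obtained identically from \eqref{f3}. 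Thus every supremum is at most the asserted constant, and the whole theorem reduces to exhibiting near–extremisers.

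For the lower bounds I would use functions of the geodesic distance alone, $u=\phi(d)$, for which $|\nabla_{\mathbb{S}^{n-1}}u|=|\phi'(d)|$ by \eqref{nablagdsc}; setting $t=\cos d$ in \eqref{frml} turns each functional into a one–dimensional integral of Beta type, $C_n\int_{-1}^{1}(\cdots)(1-t^2)^{(n-3)/2}dt$. Taking $u=\sin^a d$ with $\sigma:=(ap+n-1-p)/2>0$ yields the clean identity $F_p=|a|^pT_p$, together with $T_p=C_nB(\tfrac{p+1}{2},\sigma)$ and $\widetilde{T}_p=C_nB(\tfrac{p-1}{2},\sigma+1)$. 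Letting $a\downarrow-\alpha_{n,p}$, so $\sigma\downarrow0$, makes $T_p$ and $F_p$ diverge like $1/\sigma$ with matching leading coefficients: this gives \eqref{f3shrp1} and \eqref{f3shrp2} at once, while for \eqref{f3shrp3} the identity $\alpha_{n,p}^pT_p-F_p=(\alpha_{n,p}^p-|a|^p)T_p$ cancels the divergence and leaves a finite multiple of $\widetilde{T}_p$, namely $(p-1)\alpha_{n,p}^{p-1}\widetilde{T}_p$. The same $\sin^a d$ family, now through the leading–order balance $G_p\sim\alpha_{n,p}^pS_p$ and the finiteness of $\widetilde{S}_p$, settles \eqref{f1shrp1} and \eqref{f1shrp2} for the full range $1<p<n-1$.

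The genuinely delicate case, and the step I expect to cost the most, is \eqref{f1shrp3}. Here the power of $\sin d$ fails, because $\alpha_{n,p}^pS_p$ and $G_p$ cancel to leading order and the surviving finite part is sensitive to the global profile rather than to the behaviour at the pole. The profile that makes the H\"older and Young steps in the proof of \eqref{f1} asymptotically sharp is a power of $\tan d$: taking $u=(\tan d)^a$ on the cap $\{d<\pi/2\}$, extended by zero, the reduction produces the exact identity $G_p=|a|^pS_p$, so that $\alpha_{n,p}^pS_p-G_p=(\alpha_{n,p}^p-|a|^p)S_p$ again collapses to a finite multiple of $\widetilde{S}_p$ and the quotient tends to $(n-p)\alpha_{n,p}^{p-1}$ as $a\downarrow-\alpha_{n,p}$. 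The obstacle is admissibility: near $d=\pi/2$ one has $|\nabla u|^p\sim|\tan d|^{(a+1)p}$, so $u\in W^{1,p}$ requires $(a+1)p<1$, and in the critical limit $a\to-\alpha_{n,p}$ this is exactly $2p<n$. This is precisely the source of the extra restriction $1<p<n/2$, and checking this $W^{1,p}$ membership carefully, together with the boundary contributions in the integration by parts on the cap, is where the argument needs the most attention.
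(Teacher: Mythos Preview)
Your approach is correct and follows the same strategy as the paper: radial test functions $u=\phi(d)$, reduction to one--dimensional Beta--type integrals via \eqref{frml}, and passage to the critical exponent. For \eqref{f3shrp1}--\eqref{f3shrp3} your family $\sin^{a}d$ is exactly the paper's $v_{\epsilon}=(\sin d)^{-(n-p-1-2\epsilon)/p}$, and for \eqref{f1shrp3} your $(\tan d)^{a}$ on the cap is the hemisphere version of the paper's $\tilde u_{\epsilon}=|\cot d|^{(n-p-1-2\epsilon)/p}$, with the same $W^{1,p}$ obstruction at the equator producing the constraint $p<n/2$.

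The one place you diverge is \eqref{f1shrp1}--\eqref{f1shrp2}. The paper uses $u_{\epsilon}=|\cot d|^{(n-p-1-2\epsilon)/p}\cos d$, whose gradient has two terms, so $G_{p}(u_{\epsilon})$ is only estimated from above via Minkowski and an auxiliary quantity $\Lambda_{n,p}(\epsilon)\to 0$. Your choice $\sin^{a}d$ is cleaner here: it lies in $W^{1,p}(\mathbb{S}^{n-1})$ for the full range $1<p<n-1$ whenever $a>-\alpha_{n,p}$, and $G_{p}$, $S_{p}$, $\widetilde S_{p}$ are each a single Beta integral with $G_{p}/S_{p}\to\alpha_{n,p}^{p}$ and $\widetilde S_{p}/S_{p}\to 0$, so the two limits drop out without Minkowski. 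Conversely, for \eqref{f1shrp3} the paper's globally defined $|\cot d|^{b}$ spares you the extension--by--zero check across $\{d=\pi/2\}$; your worry about ``boundary contributions in the integration by parts on the cap'' is misplaced, since no integration by parts is needed in the sharpness argument---the only issue is $W^{1,p}$ membership of the zero extension, which holds because $u$ is continuous at the equator and $|\nabla u|^{p}\sim|\tan d|^{(a+1)p}$ is locally integrable there precisely when $(a+1)p<1$.
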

\begin{rem}
The values $\frac{n}{2}\leq p< n-1$ can be admitted in (\ref{f1shrp3}) if the supremum
is taken over nontrivial functions
in $L^{p}(\mathbb{S}^{n-1})$ with weak gradient
in the weighted space
$L^{p}(\mathbb{S}^{n-1};|
\cos{d(\Theta_{n-1},\Phi_{n-1})}|^{p}d\sigma_{n-1})$.
\end{rem}
\begin{proof}
Fix $n\geq 4$ and $1< p<n-1$. Consider the function
\begin{equation*}
u_{\epsilon}(\Theta_{n-1}):=
\left|\cot{d(\Theta_{n-1},\Phi_{n-1})}\right|^{
{(n-p-1-2\epsilon)}/{p}}\cos{d(\Theta_{n-1},\Phi_{n-1})}
\end{equation*}
on $\mathbb{S}^{n-1}\setminus
\left\{\pm \Phi_{n-1}\right\}$. Verifiably $u_{\epsilon}\in W^{1,p}\left(\mathbb{S}^{n-1}\right)$ for every $\epsilon>0$. Moreover
\begin{equation}\label{intusnp}
\int_{\mathbb{S}^{n-1}}
\frac{|u_{\epsilon}|^{p}}{\sin^{p}{d}}d\sigma_{n-1}
=I_{n,p}(\epsilon),
\end{equation}
where
\begin{equation*}
I_{n,p}(\epsilon):=\int_{\mathbb{S}^{n-1}}
\frac{\left|\cos{d}\right|^{n-1-2\epsilon}}
{\left(\sin{d}\right)^{n-1-2\epsilon}}
d\sigma_{n-1}.
\end{equation*}
Exploiting formula (\ref{frml}) we find
\begin{eqnarray}
\nonumber
I_{n,p}(\epsilon)&=&
C_{n}\int_{-1}^{1}\frac{
|t|^{n-1-2\epsilon}\left(\sqrt{1-t^2}\right)^{n-3}}{
\left(\sqrt{1-t^2}\right)^{n-1-2\epsilon}}dt\\
\label{inpepslon}&=&
2C_{n}\int_{0}^{1}t^{n-1-2\epsilon}
\left(1-t^2\right)^{-1+\epsilon}dt\,=\,C_{n}
\frac{\Gamma{(\epsilon)}\Gamma{\left(
\frac{n}{2}-\epsilon\right)}}{\Gamma{\left(
\frac{n}{2}\right)}}.
\end{eqnarray}
Notice that $I_{n,p}(\epsilon)$ is finite for every $\epsilon>0$ but blows up in the limit. Additionally
\begin{align}
\nonumber
&\int_{\mathbb{S}^{n-1}}
\frac{|u_{\epsilon}|^{p}}{\sin^{p-2}{d}}
d\sigma_{n-1}=
\int_{\mathbb{S}^{n-1}}
\frac{|\cos{d}|^{n-1-2\epsilon}}{
(\sin{d})^{n-3-2\epsilon}}
d\sigma_{n-1}\\
\label{u2nsmn2}
&=2C_{n}\int_{0}^{1}t^{n-1-2\epsilon}
\left(1-t^2\right)^{\epsilon}dt
=C_{n}
\frac{\epsilon\Gamma{(\epsilon)}\Gamma{\left(
\frac{n}{2}-\epsilon\right)}}{\frac{n}{2}\Gamma{\left(
\frac{n}{2}\right)}}.
\end{align}
Furthermore, for all $\Theta_{n-1}\notin \left\{\pm\Phi_{n-1}\right\}\cup
\left\{\Theta_{n-1}\in{\mathbb{S}}^{n-1}:
d\left(\Theta_{n-1},\Phi_{n-1}\right)={\pi}/{2}
\right\}$
\begin{equation*}
\begin{split}
\cos{d}\,\nabla_{\mathbb{S}^{n-1}} u_{\epsilon}
=&-
\frac{n-p-1-2\epsilon}{p}\left|\cot{d}\right|^{
\frac{n-1-2\epsilon}{p}}
\sign{(\cos{d})}\,\nabla_{\mathbb{S}^{n-1}}d +\\
&-|\cot{d}|^{
\frac{n-p-1-2\epsilon}{p}}\sin{d}\cos{d}\,
\nabla_{\mathbb{S}^{n-1}}d.
\end{split}
\end{equation*}
And Minkowski's inequality implies
\begin{equation*}
\begin{split}
\left\|\cos{d}\,\nabla_{\mathbb{S}^{n-1}} u_{\epsilon} \right\|^{p}_{L^{p}\left(\mathbb{S}^{n-1}\right)}\leq&\;
\biggl(\left(\frac{n-p-1-2\epsilon}{p}\right)
\left\|\left|\cot{d}\right|^{
\frac{n-1-2\epsilon}{p}}\right\|_{L^{p}
\left(\mathbb{S}^{n-1}\right)}+\\
&+\left\||\cot{d}|^{\frac{n-p-1-2\epsilon}{p}}
\sin{d}\cos{d}\right\|
_{L^{p}\left(\mathbb{S}^{n-1}\right)}\biggr)^{p}.
\end{split}
\end{equation*}
with $0<\epsilon<(n-p-1)/2$. That is
\begin{equation*}
\begin{split}
&\int_{\mathbb{S}^{n-1}}
|\nabla_{\mathbb{S}^{n-1}} u_{\epsilon}|^{p}\left|\cos{d}\right|^{p}
d\sigma_{n-1}
\leq\left(\alpha_{n,p}-\frac{2}{p}\epsilon\right)^{p}
I_{n,p}(\epsilon)\times\\&
\left(1+\left(
{\int_{\mathbb{S}^{n-1}}
\frac{\left|\cos{d}\right|^{n-1-2\epsilon}}
{\left(\sin{d}\right)^{n-2p-1-2\epsilon}}
d\sigma_{n-1}}\right)^{{1}/{p}}\bigg{/}
\left({\left(\alpha_{n,p}-\frac{2}{p}\epsilon\right)^{p}
I_{n,p}(\epsilon)}\right)^{{1}/{p}}\right)^{p}.
\end{split}
\end{equation*}
Since, by (\ref{frml}), we have
\begin{equation*}
\int_{\mathbb{S}^{n-1}}
\frac{\left|\cos{d}\right|^{n-1-2\epsilon}}
{\left(\sin{d}\right)^{n-2p-1-2\epsilon}}
d\sigma_{n-1}=C_{n}
\frac{\Gamma \left(\frac{n}{2}-\epsilon\right) \Gamma (p+\epsilon)}{ \Gamma \left(\frac{n}{2}+p\right)},
\end{equation*}
then, we can simplify
\begin{equation}
\label{gradcos}\int_{\mathbb{S}^{n-1}}
|\nabla_{\mathbb{S}^{n-1}} u_{\epsilon}|^{p}\left|\cos{d}\right|^{p}
d\sigma_{n-1}
\leq\left(\alpha_{n,p}-\frac{2}{p}\epsilon\right)^{p}
I_{n,p}(\epsilon)
\left(1+\Lambda_{n,p}(\epsilon)\right)^{p},
\end{equation}
where
\begin{equation*}
\Lambda_{n,p}(\epsilon):=\left(
\frac{\Gamma{(p+\epsilon)}
\Gamma{\left(\frac{n}{2}\right)}}{
\Gamma{\left(\frac{n}{2}+p\right)}
\left(\alpha_{n,p}-\frac{2}{p}\epsilon\right)^{p}
}\frac{1}{\Gamma{(\epsilon)}}
\right)^{{1}/{p}}.
\end{equation*}
Consequently, putting together the inequality (\ref{f1}), (\ref{intusnp}), (\ref{inpepslon}), (\ref{u2nsmn2}), and the estimate (\ref{gradcos}) implies
\begin{equation*}
\begin{split}
&1\geq \frac{\alpha^{p}_{n,p}
S_{p}(u_{\epsilon})}{G_{p}(u_{\epsilon}) +(n-p)\alpha^{p-1}_{n,p}\widetilde{S}_{p}(u_{\epsilon})}\\
&\geq
\frac{\alpha^{p}_{n,p}}{\left(\alpha_{n,p}-\frac{2}{p}
\epsilon\right)^{p}
\left(1+\Lambda_{n,p}(\epsilon)\right)^{p} +(n-p)\alpha^{p-1}_{n,p}
\frac{2}{n}\epsilon}\longrightarrow 1
\end{split}
\end{equation*}
as $\epsilon\longrightarrow 0^{+}$
by the continuity of the gamma function
on $]0,\infty[$, and the fact
that $\lim_{\epsilon\rightarrow 0^{+}}\Gamma{(\epsilon)}=+\infty$ that makes
 $\lim_{\epsilon\rightarrow 0^{+}}\Lambda_{n,p}(\epsilon)=0$. This
squeeze along with the inequality (\ref{f1}) proves (\ref{f1shrp1}).
In the same fashion
\begin{equation*}
1\geq\;\frac{\alpha^{p}_{n,p}
S_{p}(u_{\epsilon})-
(n-p)\alpha^{p-1}_{n,p}\widetilde{S}_{p}(u_{\epsilon})}{
G_{p}(u_{\epsilon})}\;\geq\;
\frac{\alpha^{p}_{n,p}-
(n-p)\alpha^{p-1}_{n,p}
\frac{2}{n}\epsilon}{\left(\alpha_{n,p}-\frac{2}{p}
\epsilon\right)^{p}
\left(1+\Lambda_{n,p}(\epsilon)\right)^{p} }\longrightarrow 1
\end{equation*}
when $\epsilon\longrightarrow 0^{+}$.
This shows (\ref{f1shrp2}) in the light of (\ref{f1}).
We proceed to prove (\ref{f1shrp3}) that shows that the constant $n-p$ on the right hand-side of (\ref{f1})
is the smallest possible for all $1<p<\frac{n}{2}$.
Define the function
\begin{equation*}
\tilde{u}_{\epsilon}(\Theta_{n-1}):=
\left|\cot{d(\Theta_{n-1},\Phi_{n-1})}\right|^{
{(n-p-1-2\epsilon)}/{p}},\;
\Theta_{n-1} \in \mathbb{S}^{n-1}\setminus
\left\{\pm \Phi_{n-1}\right\}.
\end{equation*}
For $\epsilon>0$, we have
$\tilde{u}_{\epsilon}\in L^{p}(\mathbb{S}^{n-1})$,
$1<p<n-1$ and $\nabla_{\mathbb{S}^{n-1}}
\tilde{u}_{\epsilon}\in L^{p}(\mathbb{S}^{n-1})$,
$1<p<n/2$ as shown by the following calculations:
\begin{equation}\label{n21}
\begin{split}
&\int_{\mathbb{S}^{n-1}}
\frac{|\tilde{u}_{\epsilon}|^{p}}{\sin^{p}{d}}d\sigma_{n-1}
=
\int_{\mathbb{S}^{n-1}}
\frac{\left|\cos{d}\right|^{n-p-1-2\epsilon}}
{\left(\sin{d}\right)^{n-1-2\epsilon}}
d\sigma_{n-1}\\
&=C_{n}\int_{-1}^{1}
t^{n-p-1-2\epsilon}(1-t^2)^{-1+\epsilon}dt=
C_{n}\frac{\Gamma{(\epsilon)} \Gamma{ \left(\frac{n-p}{2}-\epsilon\right)}}{ \Gamma{ \left(\frac{n-p}{2}\right)}},
\end{split}
\end{equation}
\begin{equation}\label{n22}
\begin{split}
&\int_{\mathbb{S}^{n-1}}
\frac{|\tilde{u}_{\epsilon}|^{p}}{
\sin^{p-2}{d}}d\sigma_{n-1}=
\int_{\mathbb{S}^{n-1}}
\frac{\left|\cos{d}\right|^{n-p-1-2\epsilon}}
{\left(\sin{d}\right)^{n-3-2\epsilon}}
d\sigma_{n-1}\\
&=C_{n}\int_{-1}^{1}
t^{n-p-1-2\epsilon}(1-t^2)^{\epsilon}dt=
C_{n}\frac{\epsilon\Gamma{(\epsilon)} \Gamma{ \left(\frac{n-p}{2}-\epsilon\right)}}{\frac{n-p}{2} \Gamma{ \left(\frac{n-p}{2}\right)}},
\end{split}
\end{equation}
\begin{equation}\label{n23}
\begin{split}
&\int_{\mathbb{S}^{n-1}}
\left|\nabla_{\mathbb{S}^{n-1}}\tilde{u}_{\epsilon}
\right|^{p}
\left|\cos{d}\right|^{p}
d\sigma_{n-1}=\left(\alpha_{n,p}-\frac{2}{p}
\epsilon\right)^{p}
\int_{\mathbb{S}^{n-1}}
\frac{\left|\cos{d}\right|^{n-p-1-2\epsilon}}
{\left(\sin{d}\right)^{n-1-2\epsilon}}
d\sigma_{n-1}\\
&=C_{n}
\left(\alpha_{n,p}-\frac{2}{p}\epsilon\right)^{p}
\frac{\Gamma{(\epsilon)} \Gamma{ \left(\frac{n-p}{2}-\epsilon\right)}}{ \Gamma{ \left(\frac{n-p}{2}\right)}}.
\end{split}
\end{equation}
Combining (\ref{n21}) - (\ref{n23}) yields
\begin{equation*}
\frac{\alpha^{p}_{n,p}
S_{p}(\tilde{u}_{\epsilon})-G_{p}(\tilde{u}_{\epsilon})}
{\alpha^{p-1}_{n,p}\widetilde{S}_{p}(\tilde{u}_{\epsilon})}
=\frac{n-p}{2}\frac{
\alpha^{p}_{n,p}-
(\alpha_{n,p}-\frac{2}{p}\epsilon)^{p}
}{\epsilon}
\longrightarrow n-p
\end{equation*}
as $\epsilon\longrightarrow 0^{+}$. This convergence
together with the inequality
(\ref{f3}) confirm  (\ref{f3shrp3}).\\
\indent Now we turn our attention to (\ref{f3shrp1})-(\ref{f3shrp3}).
Let $2\leq p<n-1$, $n\geq 4$, and define the function
\begin{equation*}
v_{\epsilon}(\Theta_{n-1}):=(\sin{d(\Theta_{n-1},\Phi_{n-1})})^{-
{(n-p-1-2\epsilon)}/{p}}
\end{equation*}
on $\mathbb{S}^{n-1}\setminus
\left\{\pm \Phi_{n-1}\right\}$. Evidently
$v_{\epsilon}\in W^{1,p}\left(\mathbb{S}^{n-1}\right)$ for every $\epsilon>0$. Furthermore, $0<\epsilon<(n-p-1)/2$, we have
\begin{equation*}
\int_{\mathbb{S}^{n-1}}
\frac{|v_{\epsilon}|^{p}}{|\tan{d}|^{p}}d\sigma_{n-1}
=J_{n,p}(\epsilon),\;
\int_{\mathbb{S}^{n-1}}
|\nabla_{\mathbb{S}^{n-1}} v_{\epsilon}|^{p}
d\sigma_{n-1}=
\left(\alpha_{n,p}-\frac{2}{p}\epsilon\right)^{p}
J_{n,p}(\epsilon),
\end{equation*}
where
\begin{equation*}
J_{n,p}(\epsilon):=\int_{\mathbb{S}^{n-1}}
\frac{|\cos{d}|^{p}}{(\sin{d})^{n-1-2\epsilon}}
d\sigma_{n-1}=
2C_{n}\int_{0}^{1}t^{p}
\left(1-t^2\right)^{-1+\epsilon}dt=C_{n}
\frac{\Gamma{(\epsilon)}\Gamma{\left(
\frac{p+1}{2}\right)}}{\Gamma{\left(
\frac{p+1}{2}+\epsilon\right)}},
\end{equation*}
using formula (\ref{frml}). We also have
\begin{equation*}
\begin{split}
&\int_{\mathbb{S}^{n-1}}
\frac{|v_{\epsilon}|^{p}}{|\tan{d}|^{p-2}}
d\sigma_{n-1}=
\int_{\mathbb{S}^{n-1}}
\frac{|\cos{d}|^{p-2}}{(\sin{d})^{n-3-2\epsilon}}
d\sigma_{n-1}\\
&=2C_{n}\int_{0}^{1}t^{p-2}
\left(1-t^2\right)^{\epsilon}dt=C_{n}
\frac{\epsilon\Gamma{(\epsilon)}\Gamma{\left(
\frac{p-1}{2}\right)}}{\Gamma{\left(
\frac{p+1}{2}+\epsilon\right)}}.
\end{split}
\end{equation*}
Consequently
\begin{equation*}
\begin{split}
&\frac{\alpha^{p}_{n,p}
T_{p}(v_{\epsilon})}{F_{p}(v_{\epsilon}) +(p-1)\alpha^{p-1}_{n,p}\widetilde{T}_{p}(v_{\epsilon})}\\
&=\frac{\alpha^{p}_{n,p}
\Gamma{\left(
\frac{p+1}{2}\right)}}{
(\alpha_{n,p}-\frac{2}{p}\epsilon)^{p}\Gamma{\left(
\frac{p+1}{2}\right)}
 +
(p-1)\epsilon\alpha^{p-1}_{n,p}\Gamma{\left(
\frac{p-1}{2}\right)}}\longrightarrow 1
\end{split}
\end{equation*}
as $\epsilon\longrightarrow 0^{+}$. This together with the inequality (\ref{f3}) proves (\ref{f3shrp1}).
In addition
\begin{equation*}
\begin{split}
&\frac{\alpha^{p}_{n,p}
T_{p}(v_{\epsilon})-
(p-1)\alpha^{p-1}_{n,p}\widetilde{T}_{p}(v_{\epsilon})}{
F_{p}(v_{\epsilon})}\\
&=\frac{\alpha^{p}_{n,p}
\Gamma{\left(
\frac{p+1}{2}\right)}-
(p-1)\epsilon\alpha^{p-1}_{n,p}\Gamma{\left(
\frac{p-1}{2}\right)}}{
(\alpha_{n,p}-\frac{2}{p}\epsilon)^{p}\Gamma{\left(
\frac{p+1}{2}\right)}}\longrightarrow 1
\end{split}
\end{equation*}
when $\epsilon\longrightarrow 0^{+}$ which proves (\ref{f3shrp2}).
Finally, we show (\ref{f3shrp3}) that demonstrates that the constant
$p-1$ on the right hand-side of (\ref{f3})
is optimal. We  have
\begin{equation*}
\begin{split}
&\frac{\alpha^{p}_{n,p}
T_{p}(v_{\epsilon})-F_{p}(v_{\epsilon})}
{\alpha^{p-1}_{n,p}\widetilde{T}_{p}(v_{\epsilon})}
=\frac{\alpha^{p}_{n,p}
\Gamma{\left(
\frac{p+1}{2}\right)}-
(\alpha_{n,p}-\frac{2}{p}\epsilon)^{p}\Gamma{\left(
\frac{p+1}{2}\right)}}{\epsilon\alpha^{p-1}_{n,p}\Gamma{\left(
\frac{p-1}{2}\right)}}\\
&=\frac{
\left(\alpha^{p}_{n,p}-
(\alpha_{n,p}-\frac{2}{p}\epsilon)^{p}
\right)\frac{p-1}{2}\Gamma{\left(
\frac{p-1}{2}\right)}}{\epsilon\alpha^{p-1}_{n,p}\Gamma{\left(
\frac{p-1}{2}\right)}}
=\frac{
\left(\alpha^{p}_{n,p}-
(\alpha_{n,p}-\frac{2}{p}\epsilon)^{p}
\right)}{\epsilon}\frac{p-1}{2\alpha^{p-1}_{n,p}}
\longrightarrow p-1
\end{split}
\end{equation*}
using L'H\^{o}pital's rule.
\end{proof}
An important consequence of (\ref{f1shrp3})
is that, in any dimension $n\geq 4$, and for every $1<p<n-1$,  we can find $u\in C^{\infty}\left(\mathbb{S}^{n-1}\right)$ such that
\begin{eqnarray*}
\alpha^{p}_{n,p}S_{p}(u)&>& G_{p}(u) +
\alpha^{p-1}_{n,p}\widetilde{S}_{p}(u).
\end{eqnarray*}
It similarly follows from (\ref{f3shrp3}) that the inequality
\begin{eqnarray*}
\alpha^{p}_{n,p}T_{p}(u)&\leq& F_{p}(u) +
\alpha^{p-1}_{n,p}\widetilde{T}_{p}(u),\;p> 2
\end{eqnarray*}
does not hold true on $C^{\infty}\left(\mathbb{S}^{n-1}\right)$, $n\geq 4$.
More interestingly,
\begin{thm}
The inequality
\begin{eqnarray}\label{inqfls}
\alpha^{p}_{n,p}S_{p}(u)&\leq& F_{p}(u) +
\alpha^{p-1}_{n,p}\widetilde{S}_{p}(u)
\end{eqnarray}
is generally false on $W^{1,p}\left(\mathbb{S}^{n-1}\right)$ for every $1<p<n-1$, $n\geq 4$.
In particular, there exists  $u\in H^{1}\left(\mathbb{S}^{n-1}\right)$ such that
\begin{equation*}
\left(\frac{n-3}{2}\right)^{2}\int_{\mathbb{S}^{n-1}} \frac{u^{2}}{\sin^{2}{d}}d\sigma_{n-1}>
\int_{\mathbb{S}^{n-1}} |\nabla_{\mathbb{S}^{n-1}}u|^{2}d\sigma_{n-1} +
\frac{n-3}{2}\int_{\mathbb{S}^{n-1}} {u}^{2}d\sigma_{n-1}
\end{equation*}
for every $n>4$.
\end{thm}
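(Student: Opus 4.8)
The plan is to disprove (\ref{inqfls}) by exhibiting an explicit test function, and the simplest possible choice already settles a large subrange: the constant function $u\equiv 1$. Since $\nabla_{\mathbb{S}^{n-1}}1=0$ we have $F_{p}(1)=0$, so (\ref{inqfls}) collapses to the purely weighted comparison $\alpha_{n,p}^{p}S_{p}(1)\le\alpha_{n,p}^{p-1}\widetilde{S}_{p}(1)$, i.e. $\alpha_{n,p}S_{p}(1)\le\widetilde{S}_{p}(1)$. Thus the whole matter reduces to computing the single ratio $S_{p}(1)/\widetilde{S}_{p}(1)$, and no gradient estimate is needed at all.

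First I would evaluate both constants with formula (\ref{frml}). Writing $t=\cos d=\Phi_{n-1}\cdot\Theta_{n-1}$ and $\sin d=\sqrt{1-t^{2}}$, one obtains
\begin{equation*}
S_{p}(1)=C_{n}\int_{-1}^{1}(1-t^{2})^{\frac{n-3-p}{2}}\,dt=C_{n}\,B\big(\tfrac12,\tfrac{n-1-p}{2}\big),\qquad
\widetilde{S}_{p}(1)=C_{n}\,B\big(\tfrac12,\tfrac{n+1-p}{2}\big),
\end{equation*}
both finite for $p<n-1$ by Remark \ref{rmcon}. Using $\Gamma(x+1)=x\Gamma(x)$ to cancel the common $\Gamma(\tfrac12)$ factors, the ratio telescopes to $S_{p}(1)/\widetilde{S}_{p}(1)=(n-p)/(n-1-p)$, whence, since $\alpha_{n,p}=(n-1-p)/p$,
\begin{equation*}
\frac{\alpha_{n,p}^{p}S_{p}(1)}{F_{p}(1)+\alpha_{n,p}^{p-1}\widetilde{S}_{p}(1)}
=\alpha_{n,p}\,\frac{S_{p}(1)}{\widetilde{S}_{p}(1)}=\frac{n-p}{p}.
\end{equation*}
This exceeds $1$ precisely when $p<n/2$, so for every $1<p<n/2$ the constant function violates (\ref{inqfls}). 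For the displayed $H^{1}$ statement I would then specialize $p=2$ to the same $u\equiv 1$: here $F_{2}(1)=0$, $\widetilde{S}_{2}(1)=\int_{\mathbb{S}^{n-1}}d\sigma_{n-1}=|\mathbb{S}^{n-1}|$, and $\alpha_{n,2}^{2}S_{2}(1)=\tfrac{n-2}{2}\alpha_{n,2}\widetilde{S}_{2}(1)=\tfrac{(n-2)(n-3)}{4}|\mathbb{S}^{n-1}|$, which is strictly larger than $\alpha_{n,2}\widetilde{S}_{2}(1)=\tfrac{n-3}{2}|\mathbb{S}^{n-1}|$ exactly when $n>4$. This reproduces the stated strict inequality verbatim.

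The remaining range $n/2\le p<n-1$ is where the genuine difficulty lies, since there $\tfrac{n-p}{p}\le 1$ and the constant function no longer defeats (\ref{inqfls}). For this range the natural candidate is the family $u_{\epsilon}=|\cot d|^{(n-p-1-2\epsilon)/p}\cos d\in W^{1,p}(\mathbb{S}^{n-1})$ already used in the preceding proof, for which a direct computation gives the clean identity $|\nabla_{\mathbb{S}^{n-1}}u_{\epsilon}|=|\cos d|^{a}\,(a+\sin^{2}d)\,\sin^{-(a+1)}d$ with $a=(n-p-1-2\epsilon)/p$; this reduces $F_{p}(u_{\epsilon})$ through (\ref{frml}) to the one-dimensional integral $2C_{n}\int_{0}^{1}t^{\,n-p-1-2\epsilon}(1-t^{2})^{\epsilon-1}(a+1-t^{2})^{p}\,dt$. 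I would then study $\lim_{\epsilon\to0^{+}}\big(\alpha_{n,p}^{p}S_{p}(u_{\epsilon})-F_{p}(u_{\epsilon})\big)/\widetilde{S}_{p}(u_{\epsilon})$ and ask whether it exceeds $\alpha_{n,p}^{p-1}$.

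The main obstacle is precisely this limit. Both $\alpha_{n,p}^{p}S_{p}(u_{\epsilon})$ and $F_{p}(u_{\epsilon})$ blow up like $1/\epsilon$ as $\epsilon\to0^{+}$, the mass of the weight $(1-t^{2})^{\epsilon-1}$ concentrating at the pole $t=1$, where the two integrands coincide to leading order; while $\widetilde{S}_{p}(u_{\epsilon})$ stays bounded away from $0$. One therefore confronts a genuine $0/0$ indeterminacy that must be resolved by extracting the $O(1)$ term, for instance by splitting the integrand at its value at $t=1$ and applying a Watson-type expansion of the Beta integral. Unlike $G_{p}$, the weightless gradient functional $F_{p}(u_{\epsilon})$ retains the non-elementary factor $(a+1-t^{2})^{p}$, so the finite part of the limit is an explicit but delicate definite integral whose comparison with $\alpha_{n,p}^{p-1}$ is the crux of extending the counterexample beyond the transparent regime $1<p<n/2$ handled by $u\equiv1$.
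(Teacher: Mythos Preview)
Your treatment of the range $1<p<n/2$ via the constant function, and of the $H^{1}$ special case for $n>4$, is correct and matches the paper's opening observation exactly. The genuine gap is the range $n/2\le p<n-1$, which you leave unresolved. Your candidate $u_{\epsilon}=|\cot d|^{a}\cos d$ is ill-suited for precisely the reason you diagnose: the factor $(a+\sin^{2}d)^{p}$ in $|\nabla u_{\epsilon}|^{p}$ does not separate, so $F_{p}(u_{\epsilon})$ is not a clean multiple of $S_{p}(u_{\epsilon})$, and the resulting $0/0$ limit would indeed require a delicate finite-part extraction.

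The paper bypasses this difficulty entirely by a different test function,
\[
w_{\epsilon}(\Theta_{n-1})=\left(\frac{1+\cos d}{\sin d}\right)^{(n-p-1-2\epsilon)/p},
\]
whose point is that $(1+\cos d)/\sin d=\cot(d/2)$ and hence, with $a=(n-p-1-2\epsilon)/p$, a one-line differentiation gives $|\nabla_{\mathbb{S}^{n-1}}w_{\epsilon}|=a\,w_{\epsilon}/\sin d$ \emph{exactly}. Consequently $F_{p}(w_{\epsilon})=(\alpha_{n,p}-2\epsilon/p)^{p}S_{p}(w_{\epsilon})$, so the numerator $\alpha_{n,p}^{p}S_{p}(w_{\epsilon})-F_{p}(w_{\epsilon})$ factors as $\bigl[\alpha_{n,p}^{p}-(\alpha_{n,p}-2\epsilon/p)^{p}\bigr]S_{p}(w_{\epsilon})$: a product of an explicit $O(\epsilon)$ coefficient with an integral that blows up like $\Gamma(\epsilon)$. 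Both $S_{p}(w_{\epsilon})$ and $\widetilde{S}_{p}(w_{\epsilon})$ reduce via formula~(\ref{frml}) and the substitution $t\mapsto(1+t)/2$ to Beta integrals in closed form, and the limit is computed cleanly from $\epsilon\Gamma(\epsilon)\to1$ --- no Watson expansion is needed. The missing idea, then, is not a sharper asymptotic analysis of your $u_{\epsilon}$ but the choice of a test function for which $F_{p}$ and $S_{p}$ are exact scalar multiples of one another.
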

\begin{proof}
If we test the inequality (\ref{inqfls}) with a constant function we find it false for $1<p<n/2$.
We provide an explicit counterexample in $W^{1,p}\left(\mathbb{S}^{n-1}\right)$
for which (\ref{inqfls}) fails for all $1<p<n-1$.
Define
\begin{equation*}
w_{\epsilon}(\Theta_{n-1}):=
\left(\frac{1+\cos{d(\Theta_{n-1},\Phi_{n-1})}}{\sin{d(\Theta_{n-1},\Phi_{n-1})}}\right)^{
\frac{n-p-1-2\epsilon}{p}},\quad 1<p<n-1.
\end{equation*}
When  $\epsilon$ is sufficiently small, we have
\begin{eqnarray}
\nonumber
L_{n,p}(\epsilon):&=&\left(\frac{n-p-1}{p}\right)^{p}
\int_{\mathbb{S}^{n-1}}
\frac{|w_{\epsilon}|^{p}}{|\sin{d}|^{p}}d\sigma_{n-1}-
\int_{\mathbb{S}^{n-1}}
|\nabla_{\mathbb{S}^{n-1}} w_{\epsilon}|^{p}
d\sigma_{n-1}\\
\label{lpne}&=&
\left(\left(\frac{n-p-1}{p}\right)^{p}-
\left(\frac{n-p-1}{p}-\frac{2}{p}\epsilon\right)^{p}\right)
K_{n,p}(\epsilon)
\end{eqnarray}
with
\begin{equation*}
K_{n,p}(\epsilon):=\int_{\mathbb{S}^{n-1}}
\frac{\left(1+\cos{d}\right)^{n-p-1-2\epsilon}}
{(\sin{d})^{n-1-2\epsilon}}
d\sigma_{n-1}.
\end{equation*}
Formula (\ref{frml}) helps us determine
the exact value of $K_{n,p}(\epsilon)$. Indeed, upon translating $t\rightarrow t-1$ then rescaling
$t\rightarrow 2t$, we discover that
\begin{eqnarray}
\nonumber
K_{n,p}(\epsilon)&=&
C_{n}\int_{-1}^{1}
(1+t)^{n-p-1-2\epsilon}\left(1-t^{2}\right)^{-1+
\epsilon}dt\\
\nonumber&=&2^{n-p-2}
C_{n}\int_{0}^{1}t^{n-p-2-\epsilon}
\left(1-t\right)^{-1+\epsilon}dt\\
\label{jv}
&=&2^{n-p-2}
C_{n}
\frac{\Gamma{(\epsilon)}\Gamma{\left(
n-p-1-\epsilon\right)}}{\Gamma{\left(
n-p-1\right)}}.
\end{eqnarray}
Therefore, substituting in (\ref{lpne}) from (\ref{jv}), then recognizing the limit $\lim_{\epsilon\rightarrow 0^{+}} \epsilon\Gamma{(\epsilon)}=1$, and the continuity of the gamma function on $]0,\infty[$, we obtain
\begin{eqnarray}
\nonumber
\lim_{\epsilon\rightarrow 0^{+}} L_{n,p}(\epsilon)&=&
2^{n-p-2}
C_{n}\lim_{\epsilon\rightarrow 0^{+}}
\frac{\left(\frac{n-p-1}{p}\right)^{p}-
\left(\frac{n-p-1}{p}-\frac{2}{p}\epsilon\right)^{p}}{
\epsilon}\lim_{\epsilon\rightarrow 0^{+}}
\epsilon\Gamma{(\epsilon)}\\
\label{lmlnpe}&=&
2^{n-p-1}
C_{n}\left(\frac{n-p-1}{p}\right)^{p-1}.
\end{eqnarray}
On the other hand
\begin{eqnarray}
\nonumber
\int_{\mathbb{S}^{n-1}}
\frac{|w_{\epsilon}|^{p}}{|\sin{d}|^{p-2}}d\sigma_{n-1}&=&
\int_{\mathbb{S}^{n-1}}
\frac{\left(1+\cos{d}\right)^{n-p-1-2\epsilon}}
{(\sin{d})^{n-3-2\epsilon}}
d\sigma_{n-1}\\
\nonumber&=&2^{n-p}
C_{n}\int_{0}^{1}t^{n-p-1-\epsilon}
\left(1-t\right)^{\epsilon}dt\\
\label{rnpeint}
&=&2^{n-p}
C_{n}
\frac{\Gamma{(1+\epsilon)}\Gamma{\left(
n-p-\epsilon\right)}}{(n-p)\Gamma{\left(
n-p\right)}}.
\end{eqnarray}
Hence, defining
\begin{eqnarray*}
R_{n,p}(\epsilon):&=&\left(\frac{n-p-1}{p}\right)^{p-1}
\int_{\mathbb{S}^{n-1}}
\frac{|w_{\epsilon}|^{p}}{|\sin{d}|^{p-2}}d\sigma_{n-1},
\end{eqnarray*}
we see from (\ref{rnpeint}) that
\begin{equation}
\label{lmrnpe}
\lim_{\epsilon\rightarrow 0^{+}} R_{n,p}(\epsilon)=
\frac{2^{n-p}C_{n}}{n-p}
\left(\frac{n-p-1}{p}\right)^{p-1}.
\end{equation}
Comparing (\ref{lmlnpe})  against (\ref{lmrnpe}) we conclude that, given $1<p<n-1$, there exists
$0<\epsilon_{0}<(n-p-1)/2$ such that
\begin{equation*}
\alpha^{p}_{n,p}S_{p}(w_{\epsilon})>F_{p}(w_{\epsilon})+
\alpha^{p-1}_{n,p}\widetilde{S}_{p}(w_{\epsilon})
\end{equation*}
for each $\epsilon<\epsilon_{0}$.
\end{proof}
\section{Critical $L^{p}$ Hardy inequalities}
Let $n\geq 2$ and define the following nonlinear positive functionals on $W^{1,n}(\mathbb{S}^{n}\longrightarrow \mathbb{R})$:
\begin{eqnarray*}
U_{n}(u) &:=& \int_{\mathbb{S}^{n}}
\frac{|u|^{n}}{\sin^{n}{d}\left(
\log{\frac{e}{\sin{d}}}\right)^{n}}d\sigma_{n}, \\
\widetilde{U}_{n}(u) &:=& \int_{\mathbb{S}^{n}}
\frac{|u|^{n}}{\sin^{n-2}{d}
\left(\log{\frac{e}{\sin{d}}}\right)^{n-1}}d\sigma_{n}, \\
V_{n}(u) &:=& \int_{\mathbb{S}^{n}}
\frac{|u|^{n}}{|\tan{d}|^{n}
\left(\log{\frac{e}{\sin{d}}}\right)^{n}}d\sigma_{n}, \\\widetilde{V}_{n}(u) &:=& \int_{\mathbb{S}^{n}}
\frac{|u|^{n}}{|\tan{d}|^{n-2}
\left(\log{\frac{e}{\sin{d}}}\right)^{n-1}}d\sigma_{n},\\
H_{n}(u) &:=& \int_{\mathbb{S}^{n}}
|\nabla_{\mathbb{S}^{n}} u|^{n}d\sigma_{n}, \\
Q_{n}(u) &:=& \int_{\mathbb{S}^{n}}
|\nabla_{\mathbb{S}^{n}} u|^{n}|\cos{d}|^{n} d\sigma_{n}.
\end{eqnarray*}
Define also the constant
\begin{equation*}
\gamma_{n}:=\frac{n-1}{n}.
\end{equation*}
\begin{thm}
Suppose $u\in W^{1,n}(\mathbb{S}^{n}\longrightarrow \mathbb{R})$ where $n\geq 2$. Then
$\frac{u}{\sin{d}\log{\left(\frac{e}{\sin{d}}\right)}},\,$
$\frac{u}{\tan{d}\log{\left(\frac{e}{\sin{d}}\right)}}\,$
are in $L^{n}(\mathbb{S}^{n})$. Furthermore
\begin{eqnarray}
\label{fc1} \gamma^{n}_{n}U_{n}(u)&\leq&Q_{n}(u) +
n\gamma^{n-1}_{n}\widetilde{U}_{n}(u),
\\
\label{fc2}
\gamma^{n}_{n}V_{n}(u)&\leq&H_{n}(u) +
(n-1)\gamma^{n-1}_{n}\widetilde{V}_{n}(u).
\end{eqnarray}
\end{thm}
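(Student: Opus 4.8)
The plan is to transcribe, almost verbatim, the two-part argument behind the subcritical inequalities (\ref{f1})--(\ref{f3}), with the dimension raised from $\mathbb{S}^{n-1}$ to $\mathbb{S}^{n}$ and the exponent fixed at the critical value $p=n$. On $\mathbb{S}^{n}$ Lemma \ref{gradlap} reads $|\nabla_{\mathbb{S}^{n}}d|=1$ and $\Delta_{\mathbb{S}^{n}}d=(n-1)\cot{d}$, and throughout I abbreviate $\ell:=\log\frac{e}{\sin{d}}$, recording the elementary but decisive fact that $\ell\geq 1$ on $\mathbb{S}^{n}$ because $0<\sin{d}\leq 1$. As in the subcritical case I first reduce to $u\in C^{\infty}(\mathbb{S}^{n})$ by density. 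Finiteness of $U_{n}(u)$ and $V_{n}(u)$ for smooth $u$ follows from (\ref{frml}) on $\mathbb{S}^{n}\subset\mathbb{R}^{n+1}$: setting $t=\cos{d}$ turns each weight into a one-dimensional integral whose only singularities sit at the poles $\pm\Phi_{n}$, where the integrand behaves like $d^{-1}(\log\frac{1}{d})^{-n}$, hence is integrable for every $n\geq 2$.

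For (\ref{fc1}) I introduce the tangential field $X:=\dfrac{\cos{d}}{\sin^{n-1}{d}\,\ell^{\,n-1}}\,\nabla_{\mathbb{S}^{n}}d$ and compute its divergence from $\nabla_{\mathbb{S}^{n}}\cdot(\phi\,\nabla_{\mathbb{S}^{n}}d)=\phi'+(n-1)\phi\cot{d}$ together with $\frac{d}{dd}\ell=-\cot{d}$; the contributions carrying $\ell^{-(n-1)}$ cancel, leaving the clean identity
\begin{equation*}
\nabla_{\mathbb{S}^{n}}\cdot X=(n-1)\frac{\cos^{2}{d}}{\sin^{n}{d}\,\ell^{\,n}}-\frac{1}{\sin^{n-2}{d}\,\ell^{\,n-1}}.
\end{equation*}
Pairing this with $|u|^{n}$ and integrating by parts produces $(n-1)\int\frac{|u|^{n}\cos^{2}{d}}{\sin^{n}{d}\,\ell^{n}}-\widetilde{U}_{n}(u)$ on one side and $-n\int |u|^{n-2}u\,\nabla_{\mathbb{S}^{n}}u\cdot X$ on the other. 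The latter I estimate exactly as in (\ref{yngf1f1}): since $|\nabla_{\mathbb{S}^{n}}d|=1$, H\"older with exponents $\frac{n}{n-1},n$ gives $n\,U_{n}(u)^{(n-1)/n}Q_{n}(u)^{1/n}$, and Young's inequality bounds this by $(n-1)\beta^{n/(n-1)}U_{n}(u)+\beta^{-n}Q_{n}(u)$. I then write $\cos^{2}{d}=1-\sin^{2}{d}$, which is precisely where $\ell\geq 1$ is used: it lets me absorb the residual $\int\frac{|u|^{n}}{\sin^{n-2}{d}\,\ell^{n}}$ into $\widetilde{U}_{n}(u)=\int\frac{|u|^{n}}{\sin^{n-2}{d}\,\ell^{n-1}}$. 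Rearranging yields $(n-1)\beta^{n}\bigl(1-\beta^{n/(n-1)}\bigr)U_{n}(u)\leq Q_{n}(u)+n\beta^{n}\widetilde{U}_{n}(u)$, and maximizing $s\mapsto(n-1)s^{n-1}(1-s)$, i.e.\ taking $\beta=\gamma_{n}^{(n-1)/n}$ so that $\beta^{n/(n-1)}=\gamma_{n}$ and $\beta^{n}=\gamma_{n}^{\,n-1}$, turns the $U_{n}$ coefficient into $\gamma_{n}^{\,n}$ and the $\widetilde{U}_{n}$ coefficient into $n\gamma_{n}^{\,n-1}$, which is (\ref{fc1}).

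The inequality (\ref{fc2}) follows the same scheme but mirrors (\ref{f3}): I use $Y:=\dfrac{\cos^{n-1}{d}}{\sin^{n-1}{d}\,\ell^{n-1}}\,\nabla_{\mathbb{S}^{n}}d$, chosen so that the H\"older step leaves the gradient term \emph{unweighted}, producing $H_{n}(u)$ rather than $Q_{n}(u)$. The analogous divergence computation again cancels the $\ell^{-(n-1)}$ terms and gives $\nabla_{\mathbb{S}^{n}}\cdot Y=(n-1)|\tan{d}|^{-n}\ell^{-n}-(n-1)|\tan{d}|^{-(n-2)}\ell^{-(n-1)}$, whose pairing with $|u|^{n}$ is the clean combination $(n-1)V_{n}(u)-(n-1)\widetilde{V}_{n}(u)$; here no $\cos^{2}{d}$ factor intervenes, so the $\ell\geq 1$ trick is unnecessary. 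H\"older and Young as above, followed by the same optimal $\beta=\gamma_{n}^{(n-1)/n}$, deliver $\gamma_{n}^{\,n}V_{n}(u)\leq H_{n}(u)+(n-1)\gamma_{n}^{\,n-1}\widetilde{V}_{n}(u)$.

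The main obstacle I anticipate is purely technical. The fields $X,Y$ and $\nabla_{\mathbb{S}^{n}}d$ are singular at the two poles $\pm\Phi_{n}$, and one must carry $\sign(\cos{d})$ factors through the region $d>\pi/2$, exactly as $|\tan{d}|$ is handled in the subcritical proof. Consequently the divergence theorem has to be applied on $\mathbb{S}^{n}$ with small geodesic balls about $\pm\Phi_{n}$ excised, and the resulting boundary integrals shown to vanish as the radii shrink; the finiteness estimates coming from (\ref{frml}) guarantee this. Everything else is a direct translation of the subcritical argument to $p=n$.
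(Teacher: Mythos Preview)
Your proposal is correct and follows essentially the same route as the paper: the paper multiplies $\Delta_{\mathbb{S}^{n}}\sin d$ (respectively $\Delta_{\mathbb{S}^{n}}d$) by $|u|^{n}/(\sin^{n-1}d\,\ell^{\,n-1})$ (respectively $|u|^{n}/(|\tan d|^{n-2}\tan d\,\ell^{\,n-1})$) and integrates by parts, which is exactly your divergence identity for $X$ (respectively $Y$) paired with $|u|^{n}$, after which both arguments apply H\"older--Young, use $\cos^{2}d=1-\sin^{2}d$ together with $\ell\geq 1$ for (\ref{fc1}) and $\sec^{2}d=1+\tan^{2}d$ for (\ref{fc2}), and optimize at $\beta=\gamma_{n}^{(n-1)/n}$. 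The only cosmetic point is that your $Y$ should read $|\cos d|^{n-2}\cos d\,/(\sin^{n-1}d\,\ell^{\,n-1})\,\nabla_{\mathbb{S}^{n}}d$ rather than $\cos^{n-1}d$, which you already flag in your remark about carrying $\mathrm{sign}(\cos d)$ through $d>\pi/2$.
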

\begin{rem}
Arguing as in Remark \ref{rmcon}, the functionals
$U_{n}$ and $V_{n}$ are bounded on continuous functions as the integrals $\displaystyle \int_{\mathbb{S}^{n}}
\frac{d\sigma_{n}}{\sin^{n}{d}\left(
\log{\frac{e}{\sin{d}}}\right)^{n}}$,
$\displaystyle\int_{\mathbb{S}^{n}}
\frac{d\sigma_{n}}{|\tan{d}|^{n}\left(
\log{\frac{e}{\sin{d}}}\right)^{n}}$ are convergent. This is clear thanks to formula (\ref{frml}) that ascertains
\begin{equation*}
\int_{\mathbb{S}^{n}}
\frac{d\sigma_{n}}{|\tan{d}|^{n}\left(
\log{\frac{e}{\sin{d}}}\right)^{n}}<\int_{\mathbb{S}^{n}}
\frac{d\sigma_{n}}{\sin^{n}{d}\left(
\log{\frac{e}{\sin{d}}}\right)^{n}}=C_{n}
\int_{-1}^{1}\frac{dt}{(1-t^2)
\left(\log{\frac{e}{\sqrt{1-t^2}}}\right)^{n}},
\end{equation*}
whereas the latter integral exists for all $n>1$.
\end{rem}
\begin{rem}
It is noteworthy that the integral
$\displaystyle
\int_{\mathbb{S}^{n}}
\frac{d\sigma_{n}}{|\tan{d}|^{n}\left|
\log{{c}{|\tan{d}|}}\right|^{m}}
$ is divergent for every $m\in \mathbb{R}$ and any $c>0$. Observe that
\begin{equation*}
\int_{\mathbb{S}^{n}}
\frac{d\sigma_{n}}{|\tan{d}|^{n}\left|
\log{{c}{|\tan{d}|}
}\right|^{m}}=2C_{n}\int_{0}^{1}\frac{s^n ds}{(1-s^2)
\left|\log{\left(\frac{c\sqrt{1-s^2}}{s}\right)}
\right|^{m}}.
\end{equation*}
\end{rem}
\begin{proof}
The proof of (\ref{fc1}) is
analogous to that of (\ref{f1}). Let $n\geq 2$ and use density to assume $u\in C^{\infty}(\mathbb{S}^{n})$.
Starting from (\ref{deltsn}), we integrate both sides against $\displaystyle
{|u|^n}/{\left(\sin^{n-1}{d}\log{\left(
{e}/{\sin{d}}\right)^{n-1}}\right)}$, then use the divergence theorem. We get
\begin{align}
\nonumber
&(n-1)\int_{\mathbb{S}^{n}}
\frac{|u|^n\cos^{2}{d}}{\sin^{n}{d}\left(
\log{\frac{e}{\sin{d}}}\right)^{n-1}}
d\sigma_{n}-\int_{\mathbb{S}^{n}}
\frac{|u|^n}{\sin^{n-2}{d}\left(
\log{\frac{e}{\sin{d}}}\right)^{n-1}}
d\sigma_{n}
\\ \nonumber&=
\int_{\mathbb{S}^{n}}\frac{|u|^n}{\sin^{n-1}{d}
\left(
\log{\frac{e}{\sin{d}}}\right)^{n-1}}
\Delta_{\mathbb{S}^{n}}
\sin{d}\,d\sigma_{n}\\
\nonumber
&=-\int_{\mathbb{S}^{n}}
\nabla_{\mathbb{S}^{n}} \left(\frac{|u|^n}{\sin^{n-1}{d}\left(
\log{\frac{e}{\sin{d}}}\right)^{n-1}}\right)\cdot
\nabla_{\mathbb{S}^{n}} \sin{d}\,
d\sigma_{n}\\
&=
\nonumber
\int_{\mathbb{S}^{n}}
\frac{-n|u|^{n-2}u \,\cos{d}\,\nabla_{\mathbb{S}^{n}} u
\cdot \nabla_{\mathbb{S}^{n}}{d}}
{\sin^{n-1}{d}\left(
\log{\frac{e}{\sin{d}}}\right)^{n-1}}
d\sigma_{n}+(n-1)\int_{\mathbb{S}^{n}}
\frac{|u|^n\cos^{2}{d}}{\sin^{n}{d}\left(
\log{\frac{e}{\sin{d}}}\right)^{n-1}}
d\sigma_{n}\\&
\label{prtsfc1}
\;\;\;-(n-1)\int_{\mathbb{S}^{n}}
\frac{|u|^n}{\sin^{n}{d}\left(
\log{\frac{e}{\sin{d}}}\right)^{n}}\cos^{2}{d}\,
d\sigma_{n}.
\end{align}
Using H\"{o}lder's inequality then applying Young's inequality implies
\begin{align}
\nonumber
&\int_{\mathbb{S}^{n}}
\frac{-n|u|^{n-2}u \,\cos{d}\,\nabla_{\mathbb{S}^{n}} u
\cdot \nabla_{\mathbb{S}^{n}}{d}}
{\sin^{n-1}{d}\left(
\log{\frac{e}{\sin{d}}}\right)^{n-1}}d\sigma_{n}\\
&\label{yngf1f1fc1} \leq
(n-1)\beta^{\frac{n}{n-1}}\int_{\mathbb{S}^{n}}
\frac{|u|^{n}}
{\sin^{n}{d}\left(
\log{\frac{e}{\sin{d}}}\right)^{n}}
d\sigma_{n}+
\frac{1}{\beta^{n}}
\int_{\mathbb{S}^{n}}
|\nabla_{\mathbb{S}^{n}} u|^{n}|\cos{d}|^{n}
d\sigma_{n},
\end{align}
with $\beta>0$. Returning with the estimate (\ref{yngf1f1fc1}) to the inequality (\ref{prtsfc1}), we deduce that
\begin{equation}\label{prtsf1fnfnc2}
\begin{split}
&(n-1)\left(1-\beta^{\frac{n}{n-1}}\right)\beta^{n}
\int_{\mathbb{S}^{n}}
\frac{|u|^n}{\sin^{n}{d}\left(
\log{\frac{e}{\sin{d}}}\right)^{n}}
d\sigma_{n}
\\
&\leq\int_{\mathbb{S}^{n}}
|\nabla_{\mathbb{S}^{n}} u|^{n}|\cos{d}|^{n}
d\sigma_{n}+\beta^{n}n
\int_{\mathbb{S}^{n}}\frac{|u|^n}{\sin^{n-2}{d}\left(
\log{\frac{e}{\sin{d}}}\right)^{n-1}}
d\sigma_{n},
\end{split}
\end{equation}
where we estimated $\displaystyle
\int_{\mathbb{S}^{n}}\frac{|u|^n}{\sin^{n-2}{d}\left(
\log{\frac{e}{\sin{d}}}\right)^{n}}\leq \int_{\mathbb{S}^{n}}\frac{|u|^n}{\sin^{n-2}{d}\left(
\log{\frac{e}{\sin{d}}}\right)^{n-1}}$.
The value  $\displaystyle \beta=\left(\frac{n-1}{n}\right)^{\frac{n-1}{n}}$
optimizes (\ref{prtsf1fnfnc2}) and produces (\ref{fc1}). \newline
\indent Again, the inequality (\ref{fc2}) can be proved in the same way as (\ref{f3}).
Since
$\Delta_{\mathbb{S}^{n}}d=(n-1)/\tan{d}$, then
invoking the divergence theorem we get
\begin{align}
\nonumber &(n-1)\int_{\mathbb{S}^{n}}\frac{|u|^{n}}{|\tan{d}|^{n}
\left(\log{\frac{e}{\sin{d}}}\right)^{n-1}}
d\sigma_{n}\\
\nonumber&=
\int_{\mathbb{S}^{n}}
\frac{|u|^{n}}{|\tan{d}|^{n-2}\tan{d}
\left(\log{\frac{e}{\sin{d}}}\right)^{n-1}}
\Delta_{\mathbb{S}^{n}} d\, d\sigma_{n}\\
\nonumber
&=-
\int_{\mathbb{S}^{n}}\nabla_{\mathbb{S}^{n}} d\cdot
\nabla_{\mathbb{S}^{n}} \left(
\frac{|u|^{n}}{|\tan{d}|^{n-2}
\tan{d}\left(\log{\frac{e}{\sin{d}}}\right)^{n-1}}\right)
d\sigma_{n}\\
&=
\nonumber
\int_{\mathbb{S}^{n}}
\frac{-n|u|^{n-2}u \nabla_{\mathbb{S}^{n}} u
\cdot \nabla_{\mathbb{S}^{n}} d}
{|\tan{d}|^{n-2}\tan{d}
\left(\log{\frac{e}{\sin{d}}}\right)^{n-1}}
d\sigma_{n}+\\
\nonumber&\;\;\;+(n-1)\int_{\mathbb{S}^{n}}
\frac{|u|^{n}}
{|\tan{d}|^{n}\left(\log{\frac{e}{\sin{d}}}
\right)^{n-1}}\frac{1}{\cos^{2}{d}}
d\sigma_{n}+\\
\label{prts2}
&\;\;\;-(n-1)\int_{\mathbb{S}^{n}}
\frac{|u|^{n}}
{|\tan{d}|^{n}\left(\log{\frac{e}{\sin{d}}}\right)^{n}}
d\sigma_{n}.
\end{align}
Analogously to the inequality (\ref{yng}), utilizing
H\"{o}lder's inequality followed by Young's inequality we obtain
\begin{align}
\nonumber
&\int_{\mathbb{S}^{n}}
\frac{-n|u|^{n-2}u \nabla_{\mathbb{S}^{n}} u
\cdot \nabla_{\mathbb{S}^{n}} d}
{|\tan{d}|^{n-2}\tan{d}
\left(\log{\frac{e}{\sin{d}}}\right)^{n-1}}
d\sigma_{n}\\
& \label{yng2}
\leq (n-1)\beta^{\frac{n}{n-1}}\int_{\mathbb{S}^{n}}
\frac{|u|^{n}}
{|\tan{d}|^{n}\left(\log{\frac{e}{\sin{d}}}\right)^{n}}
d\sigma_{n}+\frac{1}{\beta^{n}}
\int_{\mathbb{S}^{n}}
|\nabla_{\mathbb{S}^{n}} u|^{n}
d\sigma_{n},
\end{align}
for any $\beta>0$.
Inserting the estimate (\ref{yng2}) into  (\ref{prts2})
and rearranging while taking into account the identity $\sec^{2}{d}=1+\tan^{2}{d}$ produces
\begin{align}
\nonumber &\beta^{n}\left(1-\beta^{\frac{n}{n-1}}\right)
(n-1)\int_{\mathbb{S}^{n}}
\frac{|u|^{n}}{|\tan{d}|^{n}\left(\log{\frac{e}{\sin{d}}}
\right)^{n}}d\sigma_{n}\\
\label{f4}&\hspace*{-1cm}\leq \int_{\mathbb{S}^{n}}
|\nabla_{\mathbb{S}^{n}} u|^{n}
d\sigma_{n}+\beta^{n}(n-1)\int_{\mathbb{S}^{n}}
\frac{|u|^{n}\,d\sigma_{n}}
{|\tan{d}|^{n-2}\left(\log{\frac{e}{\sin{d}}}
\right)^{n-1}}.
\end{align}
The value of $\beta$ that optimizes the inequality (\ref{f4}) is
$\displaystyle \beta=\left(\frac{n-1}{n}\right)^{\frac{n-1}{n}}$.
\end{proof}
\begin{thm}(Sharpness of the inequalities (\ref{fc1})-(\ref{fc2}))\\
The inequality (\ref{fc1}) is optimal is the following sense:
\begin{align}
\label{fc1shrp1}&\sup_{u\in W^{1,n}\left(\mathbb{S}^{n}\right)\setminus\left\{ 0\right\}}
{\frac{\gamma^{n}_{n}U_{n}(u)}{Q_{n}(u) +
n\gamma^{n-1}_{n}\widetilde{U}_{n}(u)}}=1,\\
\label{fc1shrp2}&\sup_{u\in W^{1,n}\left(\mathbb{S}^{n}\right)\setminus\left\{ 0\right\}}
{\frac{\gamma_{n}U_{n}(u)-n
\widetilde{U}_{n}(u)}{Q_{n}(u)}}=\gamma^{1-n}_{n}.
%\label{fc1shrp3}&\sup_{u\in W^{1,n}\left(\mathbb{S}^{n}\right)\setminus\left\{ 0\right\}}
%{\frac{\gamma^{n}_{n}U_{n}(u)-Q_{n}(u)}{
%\widetilde{U}_{n}(u)}}=n\gamma^{n-1}_{n}.
\end{align}
The inequality (\ref{fc2}) is also sharp. We precisely have
\begin{eqnarray}
\label{fc2shrp1}\sup_{u\in W^{1,n}\left(\mathbb{S}^{n}\right)\setminus\left\{ 0\right\}}
{\frac{\gamma^{n}_{n}V_{n}(u)}{H_{n}(u) +
(n-1)\gamma^{n-1}_{n}\widetilde{V}_{n}(u)}}&=&1,\\
\label{fc2shrp2}\sup_{u\in W^{1,n}\left(\mathbb{S}^{n}\right)\setminus\left\{ 0\right\}}
{\frac{\gamma_{n}V_{n}(u)-(n-1)
\widetilde{V}_{n}(u)}{H_{n}(u)}}&=&\gamma^{1-n}_{n},\\
\label{fc2shrp3}\sup_{u\in W^{1,n}\left(\mathbb{S}^{n}\right)\setminus\left\{ 0\right\}}
{\frac{\gamma^{n}_{n}V_{n}(u)-H_{n}(u)}{
\widetilde{V}_{n}(u)}}&=&(n-1)\gamma^{n-1}_{n}.
\end{eqnarray}
\end{thm}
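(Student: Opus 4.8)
The plan is to adapt the sharpness argument of the subcritical theorem, replacing the power-type extremal families by logarithmic ones adapted to the critical weight $\log(e/\sin d)$. To treat (\ref{fc1}) I would test with
\[
u_{\epsilon}(\Theta_{n}):=\cos{d}\,\Bigl(\log\tfrac{e}{\sin{d}}\Bigr)^{\gamma_{n}-\epsilon},\qquad d=d(\Theta_{n},\Phi_{n}),\ \epsilon>0,
\]
and to treat (\ref{fc2}) with the purely radial family
\[
v_{\epsilon}(\Theta_{n}):=\Bigl(\log\tfrac{e}{\sin{d}}\Bigr)^{\gamma_{n}-\epsilon}.
\]
These are the critical counterparts of the subcritical families: the power of $|\cot d|$ that drove the subcritical families degenerates at the critical exponent, so the divergence that forces sharpness must instead be produced by the logarithmic weight, and the natural exponent on the logarithm is exactly $\gamma_{n}-\epsilon$. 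The first step is routine: verify $u_{\epsilon},v_{\epsilon}\in W^{1,n}(\mathbb{S}^{n})$ for every $\epsilon>0$ by the same use of (\ref{frml}) as in the subcritical proof, and note that the upper bounds in all of (\ref{fc1shrp1}), (\ref{fc1shrp2}), (\ref{fc2shrp1}), (\ref{fc2shrp2}), (\ref{fc2shrp3}) are immediate by rearranging the already-established inequalities (\ref{fc1}) and (\ref{fc2}); the real content is to show the ratios attain those values in the limit $\epsilon\to0^{+}$.

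The second step is to compute every functional on these families through (\ref{frml}). Writing $L:=\log(e/\sin d)$ and using the crucial identity $n\gamma_{n}=n-1$, each of $U_{n}(u_{\epsilon})$, $V_{n}(v_{\epsilon})$ and the dominant part of the gradient functionals reduces to an integral of the shape $\int_{-1}^{1}(1-t^{2})^{-1}\,P(t)\,\bigl(\log(e/\sqrt{1-t^{2}})\bigr)^{-1-n\epsilon}\,dt$ with $P$ continuous and $P(\pm1)=1$; the weight $(1-t^{2})^{-1}$ together with the logarithmic exponent $-1-n\epsilon$ forces these to diverge like a constant multiple of $1/\epsilon$, the blow-up being concentrated at $t=\pm1$, i.e.\ at the poles $d=0,\pi$. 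In contrast $\widetilde{U}_{n}(u_{\epsilon})$ and $\widetilde{V}_{n}(v_{\epsilon})$ reduce to integrals whose spatial weight is milder than $(1-t^{2})^{-1}$ and whose logarithmic exponent is merely $-n\epsilon$, so they stay \emph{bounded} as $\epsilon\to0^{+}$. The tangent case is the cleanest: since $v_{\epsilon}$ is a function of $d$ alone, its surface gradient is a single term and one obtains the exact identity $H_{n}(v_{\epsilon})=(\gamma_{n}-\epsilon)^{n}V_{n}(v_{\epsilon})$. In the sine case the binomial expansion of $|\nabla_{\mathbb{S}^{n}}u_{\epsilon}|^{n}$ has exactly one term carrying the full $(1-t^{2})^{-1}$ singularity, so that $Q_{n}(u_{\epsilon})=(\gamma_{n}-\epsilon)^{n}U_{n}(u_{\epsilon})+O(1)$, all remaining terms being bounded because their spatial singularity is strictly milder.

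With these asymptotics in hand the limits follow by dividing through by the divergent quantity. For (\ref{fc1shrp1}) and (\ref{fc2shrp1}) one gets $\gamma_{n}^{n}U_{n}/(Q_{n}+n\gamma_{n}^{n-1}\widetilde{U}_{n})\to\gamma_{n}^{n}/\gamma_{n}^{n}=1$ and likewise for $V_{n}$; for (\ref{fc1shrp2}) and (\ref{fc2shrp2}) the bounded terms $\widetilde{U}_{n},\widetilde{V}_{n}$ drop out and the ratio tends to $\gamma_{n}/\gamma_{n}^{n}=\gamma_{n}^{1-n}$. The delicate identity is (\ref{fc2shrp3}), where $\gamma_{n}^{n}V_{n}-H_{n}=\bigl(\gamma_{n}^{n}-(\gamma_{n}-\epsilon)^{n}\bigr)V_{n}$ and $\widetilde{V}_{n}$ tend to finite but competing quantities, so I must pin down the exact leading coefficient of the $1/\epsilon$ divergence of $V_{n}(v_{\epsilon})$ and the exact limit of $\widetilde{V}_{n}(v_{\epsilon})$. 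The substitution $w=\log(e/\sin d)$, which turns $V_{n}(v_{\epsilon})$ into $2C_{n+1}\int_{1}^{\infty}(1-e^{2-2w})^{(n-1)/2}\,w^{-1-n\epsilon}\,dw$, exhibits the leading singularity $\tfrac{2C_{n+1}}{n}\cdot\tfrac1\epsilon$, while $\widetilde{V}_{n}(v_{\epsilon})\to\tfrac{2C_{n+1}}{n-1}$; combined with $\gamma_{n}^{n}-(\gamma_{n}-\epsilon)^{n}\sim n\gamma_{n}^{n-1}\epsilon$ this yields the limit $(n-1)\gamma_{n}^{n-1}$ exactly, matching the bound coming from (\ref{fc2}).

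The main obstacle is precisely this last, quantitative point. In the subcritical theorem every integral evaluated to a closed-form ratio of Gamma functions, so the limits could be read off from the continuity of $\Gamma$; here the logarithmic weights admit no such closed form, and the sharp constant in (\ref{fc2shrp3}) hinges on matching the exact coefficient of the $1/\epsilon$ singularity of $V_{n}$ against the exact limit of $\widetilde{V}_{n}$. The technical heart is therefore the asymptotic analysis that separates the divergent contribution (concentrated at $d=0,\pi$, where the prefactors $P(t)$ tend to $1$) from the genuinely bounded remainders, and confirms that those remainders are $o(1/\epsilon)$; once that separation is justified, all five identities follow as above.
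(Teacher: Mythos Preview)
Your approach is correct and matches the paper's almost exactly: the paper also uses the logarithmic family $f_{\epsilon}=(\log(e/\sin d))^{(n-1-\epsilon)/n}=(\log(e/\sin d))^{\gamma_n-\epsilon/n}$, reduces everything through formula~(\ref{frml}), splits off the divergent piece $\int_0^1 s^{-1}(\log(e/s))^{-1-\epsilon}\,ds=1/\epsilon$ from uniformly bounded remainders, and then takes the same limits you describe, including the L'H\^opital-type computation for (\ref{fc2shrp3}).

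The only substantive difference is that the paper uses the \emph{same} test family $f_\epsilon$ for all five identities (\ref{fc1shrp1})--(\ref{fc2shrp3}), whereas you introduce the extra factor $\cos d$ in $u_\epsilon$ for the sine-type inequality (\ref{fc1}), mimicking the subcritical proof. This is unnecessary here: since $Q_n$ already carries the weight $|\cos d|^n$, testing with $f_\epsilon$ alone gives a single-term gradient and leads directly to $Q_n(f_\epsilon)=(\gamma_n-\epsilon/n)^n\int|\cos d|^{2n}\sin^{-n}d\,L^{-1-\epsilon}\,d\sigma_n$, which differs from $U_n(f_\epsilon)$ by a bounded quantity without any binomial expansion. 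Your choice of $u_\epsilon$ still works, but it forces you to control cross terms in $|\nabla u_\epsilon|^n$ that the paper's choice avoids; conversely, your choice is a more faithful analogue of the subcritical extremizer and makes the parallel between the two theorems more transparent.
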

\begin{proof}
We prove (\ref{fc1shrp1}) - (\ref{fc2shrp3}) via introducing a sequence of nonzero real functions in
$W^{1,n}\left(\mathbb{S}^{n}\right)$ for which the respective suprema are attained. Define
\begin{equation*}
f_{\epsilon}(\Theta_{n}):=
\left(\log{\frac{e}{\sin{d(\Theta_{n},\Phi_{n})}}}
\right)^{\frac{n-1-\epsilon}{n}},\;
\Theta_{n}\in \mathbb{S}^{n}\setminus
\left\{\pm \Phi_{n}\right\}.
\end{equation*}
Then, using formula (\ref{frml}), we have
\begin{align}
\nonumber&\hspace*{-1cm}\int_{\mathbb{S}^{n}}
\frac{|f_{\epsilon}|^{n}d\sigma_{n}}{\sin^{n}{d}\left(
\log{\frac{e}{\sin{d}}}\right)^{n}}=
\int_{\mathbb{S}^{n}}
\frac{d\sigma_{n}}{\sin^{n}{d}\left(
\log{\frac{e}{\sin{d}}}\right)^{1+\epsilon}}=2C_{n}
\int_{0}^{1}\frac{ds}{(1-s^2)
\left(\log{\frac{e}{\sqrt{1-s^2}}}\right)^{1+\epsilon}}\\
&\label{fc1s1}
=2C_{n}\int_{0}^{1}\frac{ds}{s\sqrt{1-s^2}
\left(\log{\frac{e}{s}}\right)^{1+\epsilon}}
=2C_{n}\left(I(\epsilon)+\tilde{I}(\epsilon)\right),
\end{align}
where
\begin{equation}\label{ispln}
I(\epsilon):=\int_{0}^{1}\frac{ds}{s
\left(\log{\frac{e}{s}}\right)^{1+\epsilon}}=
\frac{1}{\epsilon},
\end{equation}
\begin{equation}\label{isplntld}
\begin{split}
0<\tilde{I}(\epsilon):&=\int_{0}^{1}
\left(\frac{1}{\sqrt{1-s^2}}-1\right)\frac{ds}{s
\left(\log{\frac{e}{s}}\right)^{1+\epsilon}}\\
&=\int_{0}^{1}
\frac{sds}{\sqrt{1-s^2}\left(1+\sqrt{1-s^2}\right)
\left(\log{\frac{e}{s}}\right)^{1+\epsilon}}\leq
\int_{0}^{1}
\frac{sds}{\sqrt{1-s^2}}=1
\end{split}
\end{equation}
uniformly in $\epsilon$. Moreover,
\begin{align}
\nonumber&
\frac{\int_{\mathbb{S}^{n}}
\left|\nabla_{\mathbb{S}^{n}}
f_{\epsilon}\right|^{n}\left|\cos{d}\right|^{n}
d\sigma_{n}}{\left(\gamma_{n}-\frac{1}{n}\epsilon\right)^{n}}=
\int_{\mathbb{S}^{n}}
\frac{|\cos{d}|^{2n}}{\sin^{n}{d}\left(
\log{\frac{e}{\sin{d}}}\right)^{1+\epsilon}}d\sigma_{n}
\\ \nonumber &=2C_{n}
\int_{0}^{1}\frac{s^{2n}}{(1-s^2)
\left(\log{\frac{e}{\sqrt{1-s^2}}}\right)^{1+\epsilon}}ds
=2C_{n}
\int_{0}^{1}\frac{\left(1-s^2\right)^{
\frac{2n-1}{2}}}{s
\left(\log{\frac{e}{s}}\right)^{1+\epsilon}}ds
\\ \label{fc1s2}&=
2C_{n}\left(I(\epsilon)-\bar{I}(\epsilon)\right),
\end{align}
with
\begin{align}
\nonumber &\hspace*{-1 cm}
0<\bar{I}(\epsilon):=\int_{0}^{1}
\frac{1-\left(1-s^2\right)^{
\frac{2n-1}{2}}}{s
\left(\log{\frac{e}{s}}\right)^{1+\epsilon}}ds=
\int_{0}^{1}
\frac{\left(1-\left(1-s^2\right)^{2n-1}\right)ds}{
\left(1+\left(1-s^2\right)^{
\frac{2n-1}{2}}\right)s
\left(\log{\frac{e}{s}}\right)^{1+\epsilon}}\\
\label{ibrspln} &\leq
\int_{0}^{1}
\frac{1-\left(1-s^2\right)^{2n-1}}{s}ds
=\sum_{r=1}^{2n-1}\binom{2n-1}{r}
\frac{(-1)^{r+1}}{2r+1}.
\end{align}
It follows from (\ref{ibrspln}) that
\begin{equation}\label{ibrspln01}
\lim_{\epsilon\rightarrow 0^{+}}\bar{I}(\epsilon)
=\bar{I}(0)=O(1)
\end{equation}
with an implicit constant that depends only on
$n$. Furthermore
\begin{align}
\nonumber&\int_{\mathbb{S}^{n}}
\frac{|f_{\epsilon}|^{n}d\sigma_{n}}{\sin^{n-2}{d}\left(
\log{\frac{e}{\sin{d}}}\right)^{n}}=
\int_{\mathbb{S}^{n}}
\frac{d\sigma_{n}}{\sin^{n-2}{d}\left(
\log{\frac{e}{\sin{d}}}\right)^{1+\epsilon}}\\
&\label{fc1s3}
=2C_{n}
\int_{0}^{1}\frac{ds}{
\left(\log{\frac{e}{\sqrt{1-s^2}}}\right)^{1+\epsilon}}
\longrightarrow 2C_{n}
\int_{0}^{1}\frac{ds}{\log{\frac{e}{\sqrt{1-s^2}}}},
\end{align}
as $\epsilon\rightarrow 0^{+}$, by the dominated (or monotone) convergence theorem. Observe that $\int_{0}^{1}\frac{ds}{
\log{\frac{e}{\sqrt{1-s^2}}}}\leq 1$. Using
(\ref{fc1s1}) together with (\ref{ispln}), and (\ref{fc1s2}), we obtain that
\begin{equation*}
\begin{split}
&\frac{\gamma^{n}_{n}U_{n}(f_{\epsilon})}{Q_{n}(f_{\epsilon}) +n\gamma^{n-1}_{n}\widetilde{U}_{n}(f_{\epsilon})}=
\frac{\gamma^{n}_{n}\left(\frac{1}{\epsilon}+
\tilde{I}(\epsilon)\right)}{
\left(\gamma_{n}-\frac{1}{n}\epsilon\right)^{n}
\left(\frac{1}{\epsilon}-
\bar{I}(\epsilon)\right) +n\gamma^{n-1}_{n}\frac{\widetilde{U}_{n}(f_{\epsilon})}
{2C_{n}}}\\
&=\frac{\gamma^{n}_{n}\left(1+
\epsilon\tilde{I}(\epsilon)\right)}{
\left(\gamma_{n}-\frac{1}{n}\epsilon\right)^{n}
\left(1-
\epsilon\bar{I}(\epsilon)\right) +n\gamma^{n-1}_{n}\epsilon\,
\frac{\widetilde{U}_{n}(f_{\epsilon})}{2C_{n}}}
\longrightarrow 1
\end{split}
\end{equation*}
by the limits (\ref{ibrspln01}) and (\ref{fc1s3}).
This convergence proves (\ref{fc1shrp1}). In the same manner
\begin{equation*}
\begin{split}
&\frac{\gamma^{n}_{n}U_{n}(f_{\epsilon})-
n\gamma^{n-1}_{n}\widetilde{U}_{n}(f_{\epsilon})}{
Q_{n}(f_{\epsilon})}=
\frac{\gamma^{n}_{n}\left(1+
\epsilon\tilde{I}(\epsilon)\right)-
n\gamma^{n-1}_{n}\epsilon\,
\frac{\widetilde{U}_{n}(f_{\epsilon})}{2C_{n}}}{
\left(\gamma_{n}-\frac{1}{n}\epsilon\right)^{n}
\left(1-
\epsilon\bar{I}(\epsilon)\right)}
\longrightarrow 1,
\end{split}
\end{equation*}
which proves (\ref{fc1shrp2}).

Proceeding, we have
\begin{equation}\label{fc2s1}
\int_{\mathbb{S}^{n}}
\frac{|f_{\epsilon}|^{n}d\sigma_{n}}{|\tan{d}|^{n}\left(
\log{\frac{e}{\sin{d}}}\right)^{n}}=
 J_{n}(\epsilon),\;
 \int_{\mathbb{S}^{n}}
\left|\nabla_{\mathbb{S}^{n}}
f_{\epsilon}\right|^{n}d\sigma_{n}=
\left(\gamma_{n}-\frac{1}{n}\epsilon\right)^{n}
 J_{n}(\epsilon),
\end{equation}
where
\begin{equation}\label{fc2s2}
\begin{split}
&J_{n}(\epsilon):=
\int_{\mathbb{S}^{n}}
\frac{d\sigma_{n}}{|\tan{d}|^{n}\left(
\log{\frac{e}{\sin{d}}}\right)^{1+\epsilon}}=2C_{n}
\int_{0}^{1}\frac{s^{n}\,ds}{(1-s^2)
\left(\log{\frac{e}{\sqrt{1-s^2}}}\right)^{1+\epsilon}}\\
&=2C_{n}\int_{0}^{1}\frac{(1-s^2)^{\frac{n-1}{2}}\,ds}{
s\left(\log{\frac{e}{s}}\right)^{1+\epsilon}}=
2C_{n}\left(I(\epsilon)+
\tilde{J}_{n}(\epsilon)\right)
=2C_{n}\left(\frac{1}{\epsilon}+\tilde{J}_{n}(\epsilon)
\right),
\end{split}
\end{equation}
where
\begin{equation*}
\tilde{J}_{n}(\epsilon):=
\int_{0}^{1}\frac{(1-s^2)^{\frac{n-1}{2}}-1}{
s\left(\log{\frac{e}{s}}\right)^{1+\epsilon}}ds.
\end{equation*}
Notice here that
\begin{equation}\label{implct}
\lim_{\epsilon\rightarrow 0^{+}}\tilde{J}_{n}(\epsilon)=\tilde{J}_{n}(0)=O(1),
\end{equation}
where the implicit constant depends solely on the dimension $n$. This follows from
the dominated convergence theorem as we have the uniform bound
\begin{equation*}
\begin{split}
&|\tilde{J}_{n}(\epsilon)|\leq
\int_{0}^{1}\frac{1-(1-s^2)^{n-1}}{
s\left(1+(1-s^2)^{\frac{n-1}{2}}\right)
\left(\log{\frac{e}{s}}\right)^{1+\epsilon}}ds
\leq\int_{0}^{1}\frac{1-(1-s^2)^{n-1}}{
s}ds\\
&=\frac{1}{2}\sum_{r=1}^{n-1}\binom{n-1}{r}
\frac{(-1)^{r+1}}{r}.
\end{split}
\end{equation*}
We also have
\begin{equation}\label{fc2s3}
\int_{\mathbb{S}^{n}}
\frac{|f_{\epsilon}|^{n}\,d\sigma_{n}}{|\tan{d}|^{n-2}\left(
\log{\frac{e}{\sin{d}}}\right)^{n-1}}=2C_{n}
\int_{0}^{1}\frac{s^{n-2}\,ds}{
\left(\log{\frac{e}{\sqrt{1-s^2}}}\right)^{\epsilon}}
\longrightarrow \frac{2C_{n}}{n-1},
\end{equation}
by the dominated convergence theorem. Now,
using (\ref{fc2s1}) and (\ref{fc2s2})
we get
\begin{equation*}
\begin{split}
&\frac{\gamma^{n}_{n}V_{n}(f_{\epsilon})}{H_{n}(f_{\epsilon}) +(n-1)\gamma^{n-1}_{n}\widetilde{V}_{n}(f_{\epsilon})}\\
&=\frac{\gamma^{n}_{n}(1+\epsilon \tilde{J}_{n}(\epsilon))}{
\left(\gamma_{n}-\frac{\epsilon}{n}\right)^{n}(1+\epsilon \tilde{J}_{n}(\epsilon)) +
(n-1)\gamma^{n-1}_{n}\epsilon
\frac{\widetilde{V}_{n}(f_{\epsilon})}{2C_{n}}}
\longrightarrow 1
\end{split}
\end{equation*}
as $\epsilon\longrightarrow 0^{+}$ by (\ref{implct}) and the convergence in (\ref{fc2s3}). This proves (\ref{fc2shrp1}). We also have
\begin{equation*}
\begin{split}
&\frac{\gamma_{n}V_{n}(f_{\epsilon})-
(n-1)\widetilde{V}_{n}(f_{\epsilon})}{
H_{n}(f_{\epsilon}) }\\
&=\frac{\gamma_{n}(1+\epsilon \tilde{J}_{n}(\epsilon)))-
(n-1)\epsilon\frac{\widetilde{V}_{n}(f_{\epsilon})}{2C_{n}
}}{\left(\gamma_{n}-\frac{\epsilon}{n}\right)^{n}(1+\epsilon \tilde{J}_{n}(\epsilon))}
\longrightarrow \frac{1}{\gamma^{n-1}_{n}}.
\end{split}
\end{equation*}
when $\epsilon\longrightarrow 0^{+}$ using (\ref{implct}) and (\ref{fc2s3}). This proves (\ref{fc2shrp2}). Finally, we prove (\ref{fc2shrp3}). Employing (\ref{fc2s1}) and (\ref{fc2s2}) one last time, we find
\begin{equation*}
\begin{split}
&\frac{\gamma^{n}_{n}V_{n}(f_{\epsilon})-
H_{n}(f_{\epsilon})}{
\widetilde{V}_{n}(f_{\epsilon})}
=\frac{\left(\gamma^{n}_{n}-
\left(\gamma_{n}-\frac{\epsilon}{n}\right)^{n}\right)
(\frac{1}{\epsilon}+ \tilde{J}_{n}(\epsilon))}{
\frac{\widetilde{V}_{n}(f_{\epsilon})}{2C_{n}}}\\
&=\frac{\left(\gamma^{n}_{n}-
\left(\gamma_{n}-\frac{\epsilon}{n}\right)^{n}\right)
}{\epsilon}
\frac{(1+\epsilon \tilde{J}_{n}(\epsilon))}{\frac{
\widetilde{V}_{n}(f_{\epsilon})}{2C_{n}}}
\longrightarrow (n-1)\gamma^{n-1}_{n}
\end{split}
\end{equation*}
by (\ref{implct}) and the limit in (\ref{fc2s3}).

\end{proof}

\end{document}